\numberwithin{equation}{section}
\newtheorem{Theorem}{Theorem}[section]
\newtheorem{Proposition}[Theorem]{Proposition}
\newtheorem{Corollary}[Theorem]{Corollary}
\theoremstyle{definition}
\newtheorem{Example}[Theorem]{Example}
\newcommand{\db}{\overline\partial}
\newcommand{\ddbar}{{\partial\overline\partial}}
\newcommand{\wi}{\widetilde}
\newcommand{\wih}{\widehat}
\DeclareMathOperator{\ric}{Ric}
\DeclareMathOperator{\codim}{codim}
\DeclareMathOperator{\dist}{dist}
\DeclareMathOperator{\Bs}{Bs}
\newcommand{\cali}[1]{\mathscr{#1}}
\newcommand{\cO}{\cali{O}}
\newcommand{\cC}{\cali{C}}
\newcommand{\field}[1]{\mathbb{#1}}
\newcommand{\R}{\field{R}}
\newcommand{\C}{\field{C}}
\newcommand{\N}{\field{N}}
\renewcommand{\P}{\field{P}}
\newcommand{\E}{\field{E}}
\newcommand{\mO}{\mathcal{O}}
\newcommand{\eq}{\mathrm{eq}}
\newcommand{\reg}{\mathrm{reg}}
\newcommand{\sing}{\mathrm{sing}}
\newcommand{\FS}{\mathrm{FS}}
\newcommand{\PSH}{\mathrm{PSH}}
\newcommand{\Vol}{\mathrm{Vol}}
\newcommand{\comment}[1]{}
\begin{document}

\title[Zeros of random holomorphic sections of big line bundles
]
{Zeros of random holomorphic sections of big line bundles with continuous metrics}

\author{Turgay Bayraktar} 
\thanks{T.\ Bayraktar is partially supported by T\"{U}B\.{I}TAK \& German DAAD Collaboration Grant ARDEB-2531/121N191}
\address{Faculty of Engineering and Natural Sciences, Sabanc{\i} University, \.{I}stanbul, Turkey}
\email{tbayraktar@sabanciuniv.edu}

\author{Dan Coman}
\thanks{D.\ Coman is supported by the NSF Grant DMS-2154273}
\address{Department of Mathematics, Syracuse University, Syracuse, NY 13244-1150, USA}
\email{dcoman@syr.edu}

\author{George Marinescu}
\address{Univerisit\"at zu K\"oln, Mathematisches institut,
Weyertal 86-90, 50931 K\"oln, Germany 
\newline\mbox{\quad}\,Institute of Mathematics `Simion Stoilow', 
Romanian Academy, Bucharest, Romania}
\email{gmarines@math.uni-koeln.de}
\thanks{G.\ Marinescu is partially supported
by the DFG funded project
CRC TRR 191 \textquoteleft Symplectic Structures in Geometry, Algebra
and Dynamics\textquoteright{} (Project-ID 281071066-TRR 191),
the DFG Priority Program 2265 \textquoteleft Random
Geometric Systems\textquoteright{} (Project-ID 422743078) and the
ANR-DFG project QuaSiDy (Project-ID 490843120).}

\author{Vi{\^e}t-Anh Nguy{\^e}n}
\address{Universit\'e de Lille, 
Laboratoire de math\'ematiques Paul Painlev\'e, CNRS U.M.R. 8524,  
\newline
\mbox{\quad}\,59655 Villeneuve d'Ascq Cedex, France}
\email{Viet-Anh.Nguyen@univ-lille.fr}
\thanks{V.-A. Nguyen is partially supported by the Labex 
CEMPI (ANR-11-LABX-0007-01), the project QuaSiDy (ANR-21-CE40-0016), and 
Vietnam Institute for Advanced Study in Mathematics (VIASM)}

\subjclass[2010]{Primary 32L10; Secondary 32A60, 32C20, 32U05, 32U40, 60D05}
\keywords{Bergman kernel function, Fubini-Study current, 
singular Hermitian metric, normal complex space, big line bundle,
big cohomology class, holomorphic sections}

\dedicatory{Dedicated to the memory of Steven Morris Zelditch}

\date{March 31, 2024}

\pagestyle{myheadings}

\begin{abstract}
Let $X$ be a compact normal complex space, $L$ be a big holomorphic line 
bundle on $X$ and $h$ be a continuous Hermitian metric on $L$. We consider 
the spaces of holomorphic sections $H^0(X, L^{\otimes p})$ endowed with the 
inner product induced by $h^{\otimes p}$ and a volume form on $X$,
and prove that the corresponding sequence of normalized Fubini-Study currents
converge weakly to the 
curvature current $c_1(L,h_\eq)$ of the equilibrium metric $h_\eq$ associated to $h$. 
We also show that the normalized currents of integration along the zero divisors of 
random sequences of holomorphic sections converge almost surely to $c_1(L,h_\eq)$, 
for very general classes of probability measures on $H^0(X, L^{\otimes p})$.
\end{abstract}

\maketitle
\tableofcontents

\section{Introduction} \label{introduction}

Let $X$ be a projective manifold and $(L,h)$ be a positive Hermitian holomorphic 
line bundle on $X$. One can define a sequence of Fubini-Study forms 
$\gamma_p$ on $X$ by setting $\gamma_p=\Phi_p^\star(\omega_\FS)$, where 
$\Phi_p:X\to\mathbb P\big(H^0(X,L^{\otimes p})^\star\big)$ is the Kodaira map 
associated to $(L^{\otimes p},h^{\otimes p})$ and $\omega_\FS$ is the Fubini-Study 
form on a projective space. A theorem of Tian \cite{Ti90} (see also \cite{Ru98}) 
states that $\frac{1}{p}\,\gamma_p\to c_1(L,h)$ as $p\to\infty$, in the $\cC^\infty$ 
topology on $X$. Tian's theorem is a consequence of the first term asymptotic 
expansion of the Bergman kernel associated to the inner product on 
$H^0(X,L^{\otimes p})$ determined by $h^{\otimes p}$ and a fixed volume form on 
$X$. The full asymptotic expansion of the Bergman kernel in this context was 
proved by Catlin \cite{Ca99} and Zelditch \cite{Z98}. Tian's theorem can be 
generalized to the case of singular Hermitian metrics $h$ with strictly positive 
curvature currents \cite{CM15}, and further to the case when $X$ is a compact 
analytic space \cite{CM13,CMM17}. The above convergence is now in the weak 
sense of currents on $X$. 

The case of smooth non-positively curved Hermitian metrics $h$ on line bundles 
$L$ over projective manifolds $X$ was first considered by Berman \cite{Ber09}. 
He introduced an equilibrium metric $h_\eq$ associated to $h$ and proved that 
the measures having Bergman kernels as density converge weakly to an 
equilibrium measure defined using the Monge-Amp\`ere operator and the weights 
of $h_\eq$ (see also the survey \cite{Z18}). When $X=\P^n$ is the projective 
space and $L=\cO(1)$ is the hyperplane bundle, the role of the equilibrium metric 
is played by the extremal plurisubharmonic functions \cite{Bl05,Bl09,BL15}.

In this paper we consider, more generally, line bundles endowed with arbitrary 
continuous Hermitian metrics over compact normal complex spaces which are 
not assumed to be K\"ahler. More precisely, our setting is as follows:

\smallskip

(A1) $X$ is a compact, reduced, irreducible, normal complex space of dimension $n$, 
and $\omega$ is a Hermitian form on $X$.

\smallskip

(A2) $L$ is a big holomorphic line bundle on $X$ and $h$ is a continuous Hermitian 
metric on $L$.

\smallskip

We will denote by $X_\reg$ the set of regular points of $X$ and by $X_\sing$ 
the set of singular points of $X$. The definition and some basic properties of 
big line bundles are recalled in Section \ref{S:big}. Throughout the paper, 
we let $d^c:= \frac{1}{2\pi i}\,(\partial -\overline\partial)$, so 
$dd^c=\frac{i}{\pi}\,\partial\overline\partial$.   

We fix a smooth Hermitian metric $h_0$ on $L$ and write 
\begin{equation}\label{e:varphi}
\alpha:=c_1(L,h_0)\,,\,\;h=h_0e^{-2\varphi}\,,
\end{equation}
where $\varphi\in L^1(X,\omega^n)$ is called the 
\textit{(global) weight of  $h$  relative to $h_0$}. The metric $h$ is called 
bounded, continuous, resp.\ H\"older continuous, if $\varphi$ is a bounded, 
continuous, resp.\ H\"older continuous, function on $X$. The definition of the 
curvature current $c_1(L,h)$ of a singular metric $h$ is recalled in 
Section \ref{S:big}. It is easy to see that 
\begin{equation}\label{e:cu1}
c_1(L,h)=\alpha+dd^c\varphi.
\end{equation}

In our present setting (A2), the function $\varphi$ is continuous. 
Following \cite{Ber09}, we let 
\begin{equation}\label{e:eqm}
h_\eq=h_0e^{-2\varphi_\eq}
\end{equation}
denote the \textit{equilibrium metric associated to $h$}, where 
\begin{equation}\label{e:eqw}
\varphi_\eq=V_\varphi=\sup\{u:\,u\in\PSH(X,\alpha,\varphi)\}
\end{equation} 
is the $\alpha$-plurisubharmonic envelope of $\varphi$ defined in \eqref{e:env}.
It is worth noting that similar envelopes in connection to quasi-plurisubharmonic 
functions with prescribed singularities along analytic subsets were constructed 
in \cite{RWN17,CMN22,CMN23,DX21}. We have
\begin{equation}\label{e:cu2}
c_1(L,h_\eq)=\alpha+dd^c\varphi_\eq.
\end{equation}
Moreover, observe that the equilibrium metric $h_\eq$ and the function 
\begin{equation}\label{e:psih}
\Psi_h=\varphi_\eq-\varphi
\end{equation}
are independent of the choice of $h_0$ and depend only on $h$. 
Note that $\Psi_h\in L^1(X,\omega^n)$ and $\Psi_h\leq0$ on $X$. 

For $p\geq1$, we let $L^p := L^{\otimes p}$ and $h^p:= h^{\otimes p}$ be the 
Hermitian metric on $L^p$ induced by $h$. Let 
$H^0_{(2)}(X,L^p)=H^0_{(2)}(X,L^p,h^p,\omega^n)$ 
be the Bergman space of $L^2$-holomorphic sections of $L^p$ relative to the 
metric $h^p$ and the volume form $\omega^n/n!$ on $X$, 
\begin{equation}\label{e:bs}
H^0_{(2)}(X,L^p)=\left\{S\in H^0(X,L^p):\,
\|S\|_p^2:=\int_X|S|^2_{h^p}\,\frac{\omega^n}{n!}<\infty\right\},
\end{equation}
endowed with the obvious inner product. Note that $H^0_{(2)}(X,L^p)=H^0(X,L^p)$, 
since the metric $h$ is continuous.

Let $P_p,\gamma_p$ be the Bergman kernel function and Fubini-Study current 
of $H^0_{(2)}(X,L^p)$, defined in \eqref{e:Bkf} and \eqref{e:FS}. 
Our first main result describes their asymptotic behavior as $p\to\infty$. Its proof is 
given in Section \ref{S:FS}.

\begin{Theorem}\label{T:FS}
Let $X,\omega,L,h$ verify assumptions (A1)-(A2), and $h_\eq,\Psi_h$ be defined 
in \eqref{e:eqm} and \eqref{e:psih}. Then, as $p\to\infty$, we have
\begin{equation}\label{e:cbk}
\frac{1}{2p}\,\log P_p\to\Psi_h \text{ in } L^1(X,\omega^n),\;
\frac{1}{p}\,\gamma_p\to c_1(L,h_\eq) 
\text{ weakly on $X$.}
\end{equation}
Moreover, if $h$ is H\"older continuous then there exist a constant $C>0$ and $p_0\in\N$ 
such that 
\begin{equation}\label{e:sbk}
\int_X\Big|\frac{1}{2p}\,\log P_p-\Psi_h\Big|\,\omega^n\leq C\,\frac{\log p}{p}\,,
\,\;\text{for all $p\geq p_0$}.
\end{equation}
\end{Theorem}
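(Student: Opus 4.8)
The plan is to deduce Theorem~\ref{T:FS} from the asymptotics of the Bergman kernel $P_p$ together with the classical relation between $\frac1{2p}\log P_p$ and the Fubini--Study current $\gamma_p$, namely
\begin{equation*}
\frac1p\,\gamma_p = c_1(L,h^p)^{1/p} + dd^c\Big(\frac1{2p}\log P_p\Big)
= \alpha + dd^c\varphi + dd^c\Big(\frac1{2p}\log P_p\Big),
\end{equation*}
so that $L^1$-convergence of $\frac1{2p}\log P_p$ to $\Psi_h=\varphi_\eq-\varphi$ immediately yields weak convergence of $\frac1p\gamma_p$ to $\alpha+dd^c\varphi_\eq=c_1(L,h_\eq)$. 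Thus the whole theorem reduces to analyzing $\frac1{2p}\log P_p$. The two-sided estimates I would establish are: an \emph{upper bound} $\frac1{2p}\log P_p \le \Psi_h + o(1)$ (resp.\ $+\,C\frac{\log p}{p}$ in the H\"older case), valid uniformly on $X$, obtained from a submean-value / local $L^2$-estimate argument; and a \emph{lower bound} valid in $L^1$ (or in capacity), obtained by constructing, near a fixed point $x_0$ where $\varphi_\eq(x_0)$ is close to its envelope value, a peak section of $L^p$ with controlled $L^2$-norm.

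For the upper bound I would argue as follows. Fix $x_0\in X_\reg$ and a local frame; a holomorphic section $S$ of $L^p$ corresponds to a holomorphic function $f$, and $|S(x_0)|^2_{h^p}=|f(x_0)|^2 e^{-2p\varphi(x_0)}$. Using the sub-mean value inequality for $|f|^2$ on a small coordinate ball and the continuity (or H\"older continuity) of $\varphi$ to compare $e^{-2p\varphi}$ on the ball with its value at $x_0$, one gets $|S(x_0)|^2_{h^p}\le C_p\|S\|_p^2$ with $\frac1{2p}\log C_p$ controlled. Sharpening this requires replacing $\varphi$ by the extremal function of the ball and optimizing the ball radius; this produces, after taking the sup over $S$ and using the definition of $P_p$, a bound $\frac1{2p}\log P_p(x_0)\le \varphi_\eq(x_0)-\varphi(x_0)+\varepsilon_p$, because any such local estimate is, after exponentiation, an $\alpha$-psh candidate competing in the envelope \eqref{e:eqw}. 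The H\"older rate $\frac{\log p}{p}$ comes from choosing the ball radius $\sim p^{-1/2}$ and tracking the H\"older modulus. One must also handle $X_\sing$: since $X$ is normal, $X_\sing$ has codimension $\ge 2$ and the estimates extend across it by a standard removable-singularity / Grauert--Remmert argument, or one simply notes $X_\sing$ is $\omega^n$-null.

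For the lower bound, the key step — and the main obstacle — is an \emph{$L^2$-extension / Ohsawa--Takegoshi-type} or Demailly-regularization argument producing peak sections. Because $L$ is only big (not ample) and $h$ is only continuous, one cannot invoke Ohsawa--Takegoshi directly with a smooth positively curved metric; instead I would work with $h_\eq$, whose curvature current $c_1(L,h_\eq)\ge 0$ is a positive closed current in the big class $\alpha$, and approximate it (via Demailly regularization of quasi-psh functions, as in \cite{CM15,CMM17}) by metrics with analytic singularities and strictly positive curvature on a Zariski-open set. On that Zariski-open set one runs the $L^2$-estimate to build sections $S_p$ with $\frac1{2p}\log|S_p(x)|^2_{h^p}\to \Psi_h(x)$ and $\|S_p\|_p\le 1$, which forces $\frac1{2p}\log P_p(x)\ge \Psi_h(x)-\varepsilon_p$ there. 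Combined with the upper bound and the fact that $\frac1{2p}\log P_p$ is uniformly bounded above (so dominated convergence / Hartogs-type lemmas apply), this gives the $L^1$ convergence in \eqref{e:cbk}. In the H\"older case, the regularization and the $\overline\partial$-estimates can be made quantitative with the same $p^{-1/2}$ scale, matching the upper bound and yielding \eqref{e:sbk}; keeping the error terms from the envelope approximation, the Ohsawa--Takegoshi constants, and the metric regularization all of size $O(\frac{\log p}{p})$ simultaneously is the delicate part of the argument.
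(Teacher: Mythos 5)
Your overall strategy --- reduce everything to two-sided bounds on $\frac{1}{2p}\log P_p$, prove the upper bound by a submean-value argument plus the observation that $\varphi+\frac{1}{2p}\log P_p$ is an $\alpha$-psh competitor in the envelope \eqref{e:eqw}, and prove the lower bound by producing $L^2$ peak sections with respect to a metric built from $\varphi_\eq$ twisted by a strictly positively curved singular metric with analytic singularities --- is exactly the paper's strategy. But there is a genuine gap in how you propose to execute the lower bound. You want to regularize $c_1(L,h_\eq)$ and run the $\db$-estimates ``on a Zariski-open subset'' of $X$ itself. The space $X$ in (A1) is only a normal compact complex space with a \emph{Hermitian} form; it is not assumed K\"ahler, and $X_\reg$ need not carry any K\"ahler metric. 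Demailly's $L^2$-estimate (Theorem \ref{T:dbar}) requires a complete K\"ahler manifold and a K\"ahler form $\Omega$, and Demailly's regularization producing a K\"ahler current with analytic singularities as in \eqref{e:Dreg} is a statement on compact complex \emph{manifolds}. Neither tool is available on $X$ as you use it. The paper's way out --- and the reason for its Sections \ref{S:env}--\ref{S:big} --- is to observe that bigness of $L$ forces $X$ to be Moishezon, pass to a projective resolution $\pi:\wi X\to X$ (Theorem \ref{T:Moi}), check that $\pi^\star$ is an isometry of the Bergman spaces and that envelopes commute with $\pi^\star$ (Propositions \ref{P:big2} and \ref{P:blow}), and only then run both bounds on $\wi X$ with a genuine K\"ahler form $\wi\omega$ and a potential $\eta$ of a K\"ahler current in the big class $\{\pi^\star\alpha\}$. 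Without some such reduction your peak-section construction does not get off the ground.

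A second, smaller inaccuracy: the pointwise lower bound you can actually extract is not $\Psi_h-\varepsilon_p$ with $\varepsilon_p\to 0$ uniformly, but rather $\frac{1}{2p}\log P_p\geq\Psi_h+\frac{C}{p}\,\eta$, where $\eta$ has logarithmic poles along the non-K\"ahler locus $Z$ (see \eqref{e:lest1}); the weight $\eta$ cannot be removed because the curvature of the auxiliary metric must dominate a fixed K\"ahler form, which fails along $Z$ for a merely big class. Consequently the passage to $L^1$ convergence is not a dominated-convergence argument from a uniform two-sided bound: it requires the {\L}ojasiewicz-type estimate $\eta\geq -N|\log\dist(\cdot,Z)|-M$ together with the integrability of $\log\dist(\cdot,Z)$. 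With these two points repaired --- the desingularization step and the handling of the non-uniform error near $Z$ --- your outline matches the paper's proof, including the choice $\delta\sim p^{-1/\nu}$ giving the $O(\log p/p)$ rate in the H\"older case.
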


If $X$ is a projective manifold with an ample line bundle $L$ and $h$ is a 
H\"older continuous Hermitian metric on $L$, \eqref{e:sbk} was proved in 
\cite[Theorem 1.3]{DMM16}. Theorem \ref{T:FS} has analogues for the case of 
partial Bergman kernels corresponding to spaces of holomorphic sections that 
vanish to given orders along a collection $\Sigma$ of analytic subsets of $X$ \cite{CMN22,CMN23,RWN17}. It is worth noting that partial Bergman kernels 
have found applications to K\"ahler geometry and their asymptotic expansion 
was intensively studied (see, e.g., \cite{RS17,CM17,ZZ19,Fi21,Fi22} and 
references therein). 

To prove Theorem \ref{T:FS} we use ideas and techniques from 
the proof of the corresponding result for partial Bergman kernels 
\cite[Theorem 1.8]{CMN22}. In that case, one has to construct the equilibrium 
weight $\varphi_\eq$ using a divisorization $(\wi X,\pi,\wi\Sigma)$ of $(X,\Sigma)$ 
(see \cite[Proposition 1.4]{CMN22}), where $\pi:\wi X\to X$ is a desingularization 
that transforms $\Sigma$ to a divisor $\wi\Sigma\subset\wi X$. Then 
$\varphi_\eq$ is obtained as the push forward to $X$ of an upper envelope of 
quasi-plurisubharmonic functions on $\wi X$ with logarithmic poles along 
$\wi\Sigma$. In the present situation we are able to define the equilibrium weight 
$\varphi_\eq$ working directly on $X$. We develop in Section \ref{S:env} the 
required results from pluripotential theory on complex spaces. 

\medskip

In a series of papers starting with \cite{SZ99}, Shiffman and Zelditch applied 
Tian's theorem to describe the asymptotic distribution, as $p\to\infty$, of zeros 
of random holomorphic section in $H^0(X,L^p)$, where $L$ is an ample line 
bundle on a projective manifold $X$ and one uses the Gaussians or the area 
measure of unit spheres as probability measures on $H^0(X,L^p)$. Their 
fundamental work extends classical results on the distribution of zeros of random 
polynomials to the setting of projective manifolds and holomorphic sections. 

Our second main result deals with the asymptotic behavior of zeros of random 
sequences of sections, in the framework (A1)-(A2) and for very general classes 
of probability measures on $H^0_{(2)}(X,L^p)$, following the 
methods of \cite{BCM}. Let $d_p=\dim H^0_{(2)}(X,L^p)$ and 
$S_1^p,\dots,S_{d_p}^p$ be an orthonormal basis of $H^0_{(2)}(X,L^p)$. 
Using these bases, we identify the spaces $H^0_{(2)}(X,L^p)\simeq \C^{d_p}$ 
and endow them with probability measures $\sigma_p$ verifying the 
following moment condition: 

\smallskip

(B) There exist a constant $\nu\geq1$ and for every $p\geq1$ constants 
$C_p>0$ such that  
\[\int_{\C^{d_p}}\big|\log|\langle a,u\rangle|\,\big|^\nu\,d\sigma_p(a)
\leq C_p\,,\,\text{for any $u\in\C^{d_p}$ with $\|u\|=1$}\,.\] 

\smallskip


Let $[s=0]$ be the current of integration over the zero divisor of $s\in H^0(X,L^p)$.
The expectation current $\E[s_p=0]$ of the random variable 
$H^0_{(2)}(X,L^p)\ni s_p\mapsto[s_p=0]$ is defined by
\begin{equation}\label{e:expec}
\big\langle \E[s_p=0],\phi\big\rangle=
\int\limits_{H^0_{(2)}(X,L^p)}\big\langle[s_p=0],\phi\big\rangle\,d\sigma_p(s_p),
\end{equation}
where $\phi$ is a $(n-1,n-1)$ test form on $X$. Consider the product probability 
space
\begin{equation}\label{e:calH}
(\mathcal{H},\sigma)=
\left(\prod_{p=1}^\infty H^0_{(2)}(X,L^p),\prod_{p=1}^\infty\sigma_p\right). 
\end{equation}
We have the following theorem.

\begin{Theorem}\label{T:zeros}
Assume that $(X,\omega)$, $(L,h)$, $\sigma_p$ verify assumptions 
(A1), (A2), (B), and let $h_\eq,\Psi_h$ be as in \eqref{e:eqm}, \eqref{e:psih}. 
Then the following hold:

\smallskip

(i) If $\displaystyle\lim_{p\to\infty}C_pp^{-\nu}=0$ then 
$\displaystyle\frac{1}{p}\,\E[s_p=0]\to c_1(L,h_\eq)$ 
as $p\to \infty$, weakly $X$. 

(ii) If $\displaystyle\liminf_{p\to\infty}C_pp^{-\nu}=0$ then there exists an increasing 
sequence $p_j$ such that for $\sigma$-a.e.\ sequence $\{s_p\}\in\mathcal{H}$ we 
have, as $j\to\infty$, 
\[\frac{1}{p_j}\,\log|s_{p_j}|_{h^{p_j}}\to\Psi_h \text{ in } L^1(X,\omega^n),\;
\frac{1}{p_j}\,[s_{p_j}=0]\to c_1(L,h_\eq) \text{ weakly on $X$.}\]

(iii) If $\displaystyle\sum_{p=1}^{\infty}C_pp^{-\nu}<\infty$ then for 
$\sigma$-a.e.\ sequence $\{s_p\}\in\mathcal{H}$ we have, as $p\to\infty$, 
\[\frac{1}{p}\,\log|s_p|_{h^p}\to\Psi_h \text{ in } L^1(X,\omega^n),\;
\frac{1}{p}\,[s_p=0]\to c_1(L,h_\eq) \text{ weakly on $X$.}\]
\end{Theorem}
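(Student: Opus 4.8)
The plan is to derive Theorem~\ref{T:zeros} from Theorem~\ref{T:FS} together with the abstract variance-type estimates from \cite{BCM}, by reducing everything to the key comparison between $\frac1p\log|s_p|_{h^p}$ and $\frac{1}{2p}\log P_p$. First I would recall the basic identity: if $s_p=\sum_{j=1}^{d_p}a_j S_j^p$ with $a=(a_j)\in\C^{d_p}$ and $\|a\|=1$, then at a point $x\in X_\reg$ where not all $S_j^p(x)$ vanish one has
\begin{equation*}
\log|s_p(x)|_{h^p}=\log|\langle a,u_p(x)\rangle|+\tfrac12\log P_p(x),
\end{equation*}
where $u_p(x)=\big(S_1^p(x),\dots,S_{d_p}^p(x)\big)/\big(\textstyle\sum_j|S_j^p(x)|^2_{h^p}\big)^{1/2}$ is a unit vector in $\C^{d_p}$ (all norms measured in $h^p$); this is the analogue of the standard Shiffman--Zelditch decomposition, and it already shows the two parts of each displayed convergence are equivalent once the "error term" $\frac1p\log|\langle a,u_p\rangle|$ is controlled in $L^1(X,\omega^n)$. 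The Lelong--Poincaré formula $[s_p=0]=\gamma_p+dd^c\log|s_p|_{h^p}$ (valid on $X_\reg$, and extending across $X_\sing$ since $X$ is normal and $\codim X_\sing\ge2$) then converts the $L^1$-convergence of weights into weak convergence of the zero currents. So the whole theorem rests on: (a) Theorem~\ref{T:FS}, which gives $\frac1{2p}\log P_p\to\Psi_h$ in $L^1$ and $\frac1p\gamma_p\to c_1(L,h_\eq)$; and (b) showing that, under (B) and the respective growth hypotheses on $C_p$, the term $\frac1p\log|\langle a,u_p\rangle|$ tends to $0$ in $L^1(X,\omega^n)$, in expectation for (i) and almost surely along a subsequence / the full sequence for (ii), (iii).

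For part (i) I would estimate $\E\big\|\frac1p\log|\langle \cdot,u_p(x)\rangle|\big\|$ by integrating the hypothesis (B) over $x$: by Fubini,
\begin{equation*}
\int_X\int_{\C^{d_p}}\big|\log|\langle a,u_p(x)\rangle|\big|\,d\sigma_p(a)\,\omega^n(x)
\le \Vol_\omega(X)^{1-1/\nu}\Big(\int_X\int_{\C^{d_p}}\big|\log|\langle a,u_p(x)\rangle|\big|^\nu d\sigma_p(a)\,\omega^n(x)\Big)^{1/\nu}\le C\,C_p^{1/\nu},
\end{equation*}
using Hölder in the $\nu$-power and the uniform bound $C_p$ from (B) (here $u_p(x)$ is a fixed unit vector for each $x$, so (B) applies pointwise). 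Dividing by $p$ and using $C_p^{1/\nu}p^{-1}=(C_pp^{-\nu})^{1/\nu}\to0$ gives $\E\big\|\frac1p\log|\langle\cdot,u_p\rangle|\big\|_{L^1(X,\omega^n)}\to0$; combined with the decomposition above and Theorem~\ref{T:FS}, $\frac1p\E[\log|s_p|_{h^p}]\to\Psi_h$ in $L^1$, hence (applying $dd^c$ and using the definition \eqref{e:expec} of the expectation current together with $\frac1p\gamma_p\to c_1(L,h_\eq)$) $\frac1p\E[s_p=0]\to c_1(L,h_\eq)$ weakly. For (ii) and (iii) I would upgrade the $L^1$-in-expectation bound to an almost sure statement: set $Y_p=\frac1p\big\|\log|\langle\cdot,u_p\rangle|\big\|_{L^1(X,\omega^n)}$, so $\E[Y_p]\le C(C_pp^{-\nu})^{1/\nu}$. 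Under $\sum_p C_pp^{-\nu}<\infty$ (case (iii)) one gets $\sum_p\E[Y_p]<\infty$ after possibly passing through Markov's inequality and Borel--Cantelli (choosing a summable sequence $\varepsilon_p$), yielding $Y_p\to0$ $\sigma$-a.s.; under $\liminf_p C_pp^{-\nu}=0$ (case (ii)) one extracts a subsequence $p_j$ with $\sum_j C_{p_j}p_j^{-\nu}<\infty$ and runs the same Borel--Cantelli argument along $p_j$. In either case, on a full-measure set the decomposition plus Theorem~\ref{T:FS} gives $\frac1p\log|s_p|_{h^p}\to\Psi_h$ in $L^1(X,\omega^n)$, and then Lelong--Poincaré gives $\frac1p[s_p=0]\to c_1(L,h_\eq)$ weakly.

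The main obstacle I anticipate is not the probabilistic bookkeeping but ensuring the pointwise decomposition and the Lelong--Poincaré formula are legitimate on the singular space $X$: one must check that $\log P_p\in L^1(X,\omega^n)$ (so that $\Psi_h$ is a genuine $L^1$-limit, which is part of Theorem~\ref{T:FS}), that the base locus $\Bs(L^p)$ and $X_\sing$ are negligible for both the volume form $\omega^n$ and the currents involved, and that $dd^c$ commutes with the relevant limits in the weak topology of currents on $X$ (using that $X$ is normal, so plurisubharmonic functions and positive closed currents extend across $X_\sing$, as developed in the pluripotential-theory section of the paper). A secondary technical point is that hypothesis (B) controls $\log|\langle a,u\rangle|$ only for unit vectors $u$, so one must verify that $u_p(x)$ is indeed a unit vector for $\omega^n$-a.e.\ $x$ (true off $\Bs(L^p)$) and that the exceptional set contributes nothing — this is where normality of $X$ and the genericity of the base locus enter. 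Once these foundational points are in place, the three cases follow by the same template with only the strength of the convergence (expectation, subsequential a.s., full a.s.) changing according to the summability assumed on $C_pp^{-\nu}$.
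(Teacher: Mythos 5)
Your overall strategy is exactly the paper's: decompose $\log|s_p|_{h^p}=\log|\langle a,u^p(x)\rangle|+\tfrac12\log P_p$ with $u^p(x)$ the pointwise unit vector of the orthonormal basis, feed the moment condition (B) into this via H\"older and Tonelli, and combine with Theorem \ref{T:FS} and the Lelong--Poincar\'e formula. Part (i) of your argument is sound and matches the paper (the paper pairs directly against a test form $\phi$ and bounds $|\langle \E[s_p=0]-\gamma_p,\phi\rangle|\le c\,C_p^{1/\nu}\|\phi\|_{\cC^2}\int_X\omega^n$, which is your estimate in slightly different clothing). One cosmetic remark: the identity $|s_p(x)|_{h^p}/\sqrt{P_p(x)}=|\langle a,u^p(x)\rangle|$ holds for \emph{all} $a\in\C^{d_p}$, not only unit $a$; the normalization in (B) is on $u$, not on $a$, and the measures $\sigma_p$ live on all of $\C^{d_p}$, so you should drop the restriction $\|a\|=1$.

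There is, however, a genuine gap in your treatment of (iii) (and hence (ii)). You set $Y_p(s)=\frac1p\|\log|\langle\cdot,u^p\rangle|\|_{L^1(X,\omega^n)}$ and record only the first-moment bound $\E[Y_p]\le C\,(C_pp^{-\nu})^{1/\nu}$, then claim $\sum_p\E[Y_p]<\infty$ under the hypothesis $\sum_pC_pp^{-\nu}<\infty$. This implication is false for $\nu>1$: summability of $a_p:=C_pp^{-\nu}$ does not give summability of $a_p^{1/\nu}$ (take $a_p=p^{-1}(\log p)^{-2}$ and $\nu=2$). Markov plus Borel--Cantelli applied to the first moment runs into the same non-summable bound $\sigma(Y_p>\varepsilon)\le C\,a_p^{1/\nu}/\varepsilon$, so the route through $\E[Y_p]$ cannot close. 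The correct step --- and the one the paper takes --- is to keep the $\nu$-th moment: by H\"older in the $x$-variable,
\begin{equation*}
Y_p(s)^\nu\le\frac{1}{p^\nu}\Big(\int_X\omega^n\Big)^{\nu-1}\int_X\Big|\log\frac{|s_p|_{h^p}}{\sqrt{P_p}}\Big|^\nu\omega^n,
\end{equation*}
and then Tonelli together with (B) gives $\int_{\mathcal H}Y_p^\nu\,d\sigma\le C_pp^{-\nu}\big(\int_X\omega^n\big)^\nu$. Now $\sum_p\int_{\mathcal H}Y_p^\nu\,d\sigma<\infty$ follows directly from the hypothesis, hence $\sum_pY_p^\nu<\infty$ $\sigma$-a.e.\ and $Y_p\to0$ $\sigma$-a.e., with no need for Markov or a choice of $\varepsilon_p$. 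With this repair, your extraction of a subsequence $p_j$ with $\sum_jC_{p_j}p_j^{-\nu}<\infty$ handles (ii) verbatim, and the remaining foundational points you raise (negligibility of the base locus and of $X_\sing$ for $\omega^n$, continuity of $dd^c$ under $L^1$-convergence of quasi-psh potentials) are indeed needed but are covered by the normality of $X$ and the pluripotential-theoretic material of Section \ref{S:env}, exactly as you anticipate.
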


Theorem \ref{T:zeros} is proved in Section \ref{S:zeros}. 
We also recall there several general classes of probability measures,
described in \cite{BCM,BCHM}, which 
verify assumption (B) with explicit values for the constants $C_p$. We refer to 
\cite{BCM,CM15,CMM17,CMN16,CMN18,DMM16,Ba1,S08,SZ99,SZ08} 
and to the surveys \cite{BCHM,Z18,Ba2} for equidistribution results for holomorphic 
sections in various contexts.


 \section{Envelopes of quasi-plurisubharmonic functions on complex spaces}
 \label{S:env}
 
Let $X$ be a (reduced) complex space and denote by $X_\reg$, resp.\ $X_\sing$, 
the set of regular points, resp.\ singular points, of $X$.
A chart $(U,\iota,V)$ on $X$ 
is a triple consisting of an open set $U \subset X$, a closed complex space 
$V \subset G \subset \C^N$ in an open set $G$ of $\C^N$ and a biholomorphic 
map $\iota:\ U \to V$. We call $\iota:\ U \to G \subset \C^N$ a local 
embedding of $X$.

We refer to \cite{D85} for the notions of differential forms and currents on $X$. 
Recall that a continuous (resp.\ smooth) function on $X$ is a function 
$\varphi:X \to\C$ such that for every $x \in X$ there exist a local 
embedding $\iota:U\to G \subset \C^N$ with $x\in U$ and a continuous 
(resp.\ smooth) function $\tilde\varphi:G \to\C$ such that 
$\varphi|_U =\tilde\varphi\circ\iota$. If $X$ is compact, a function 
$\varphi:X \to\C$ is called H\"older continuous if there exists a finite cover of $X$ 
by open sets $U$ as above, such that $\varphi|_U$ is H\"older continuous with 
respect to the metric on $U$ induced by the Euclidean distance on $\C^N$. 

A (\textit{strictly}) \textit{plurisubharmonic} (\textit{psh}) function on $X$ is a function 
$\varphi:X\to[-\infty, \infty)$ that is not identically $-\infty$ on any open 
subset of $X$ and such that for every $x \in X$ there exist a local embedding 
$\iota:U\to G \subset \C^N$ with $x\in U$ and a (strictly) psh function 
$\tilde\varphi: G\to[-\infty,\infty)$ such that $\varphi|_U= \tilde\varphi\circ\iota$.   
We let $\PSH(X)$ denote the set of all psh functions on $X$. A function 
$\varphi:X\to[-\infty, \infty)$ is called \textit{weakly psh} if it is psh on $X_\reg$ and 
it is locally upper bounded near each point of $X_\sing$. A function 
$\varphi:X\to[-\infty, \infty)$ is called 
\textit{quasi-plurisubharmonic} (\textit{qpsh}) if it is 
locally the sum of a psh function and a smooth one. 

Let $\alpha$ be a smooth real $(1,1)$-form on $X$ that is locally $dd^c$-exact, 
i.e. for every $x\in X$ there exists an open set $U\subset X$ containing $x$ and 
a smooth function $\rho:U\to\R$ such that $\alpha=dd^c\rho$. 
Note that, by the $\partial\overline\partial$-lemma, every closed $(1,1)$-form is 
locally $dd^c$-exact on $X_\reg$. If 
$x\in X_\sing$ this condition means that, after possibly shrinking $U$, there exists 
a local embedding $\iota: U\to G \subset \C^N$ and a smooth function $\tilde\rho$ 
on $G$ such that $\tilde\rho\circ\iota=\rho$ on $U$ and $\alpha=dd^c\rho$ on 
$U\cap X_\reg$. A function $\varphi:X\to[-\infty, \infty)$ is called 
\textit{$\alpha$-plurisubharmonic} (\textit{$\alpha$-psh}) if, 
for every $x \in X$, there exists an 
open set $U\subset X$ containing $x$ and a smooth function $\rho$ on $U$ such 
that $\alpha=dd^c\rho$ and $\rho+\varphi$ is psh on $U$. We denote by 
$\PSH(X,\alpha)$ the set of all $\alpha$-psh functions on $X$. We have the 
following equivalent definition, which is obvious in the case when $X$ is smooth.

\begin{Proposition}\label{P:qpsh}
Let $\alpha$ be a smooth, real, locally $dd^c$-exact $(1,1)$-form on $X$. 
Given a function $\varphi:X\to[-\infty, \infty)$, the following are equivalent:

(i) $\varphi$ is $\alpha$-psh.

(ii) If $V\subset X$ is open and $\rho$ is a smooth function on $V$ such that 
$\alpha=dd^c\rho$ then $\rho+\varphi$ is psh on $V$.
\end{Proposition}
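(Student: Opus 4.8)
The implication (ii)$\Rightarrow$(i) is immediate, and I would dispatch it first. Since $\alpha$ is locally $dd^c$-exact, every point $x\in X$ has an open neighborhood $U$ on which there is a smooth function $\rho$ with $\alpha=dd^c\rho$; applying the hypothesis (ii) to this $U$ and $\rho$ gives that $\rho+\varphi$ is psh on $U$, which is precisely the condition required of $\varphi$ at $x$ in the definition of $\alpha$-psh. Hence $\varphi$ is $\alpha$-psh.

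For (i)$\Rightarrow$(ii), fix an open set $V\subset X$ and a smooth function $\rho$ on $V$ with $\alpha=dd^c\rho$. Since plurisubharmonicity is a local property, it suffices to show that $\rho+\varphi$ is psh in a neighborhood of an arbitrary point $x\in V$. By (i) there are an open set $U$ with $x\in U\subseteq V$ and a smooth function $\rho_0$ on $U$ with $\alpha=dd^c\rho_0$ and $\rho_0+\varphi$ psh on $U$. On $U$ I would write the decomposition
\[\rho+\varphi=(\rho_0+\varphi)+g,\qquad g:=\rho-\rho_0,\]
where $g$ is smooth on $U$ and, on $U\cap X_\reg$, satisfies $dd^c g=\alpha-\alpha=0$; thus $g$ is pluriharmonic on $U\cap X_\reg$, and in particular psh there.

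The heart of the argument is then the claim that $g$ is psh (indeed pluriharmonic) on all of $U$, not merely on $U\cap X_\reg$. Granting this, $\rho+\varphi$ is a sum of two psh functions on $U$, hence psh on $U$, and since $x\in V$ was arbitrary, $\rho+\varphi$ is psh on $V$. To establish the claim I would invoke the general principle that a continuous function on a complex space which is psh on the regular locus is psh on the whole space, applied to $g$. This is the step where the complex-space structure is genuinely used, and it is the main obstacle: one must work through a local embedding $\iota\colon U\to G\subset\C^N$ and propagate plurisubharmonicity of $g$ across the proper analytic subset $X_\sing$, using that $g$ is continuous (here even smooth) across it. On a normal space this is the classical extension theorem for plurisubharmonic functions across analytic subsets; in the general reduced case one reduces to it by pulling $g$ back to the normalization $\nu\colon\widehat U\to U$, where $g\circ\nu$ is continuous and psh off a proper analytic subset, hence psh on the normal space $\widehat U$ by the removable singularity theorem, and then transferring the conclusion back to $U$. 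As in the smooth case, this equivalence shows in particular that $\PSH(X,\alpha)$ is independent of the local choices of potentials for $\alpha$, which is how it will be used in the sequel.
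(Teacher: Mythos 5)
Your proposal is correct and follows essentially the same route as the paper: both reduce (i)$\Rightarrow$(ii) to the observation that the difference $\rho-\rho_1$ of two local potentials is smooth and pluriharmonic on $U\cap X_\reg$, and then invoke the removable-singularity principle that a function psh on the regular locus and continuous across $X_\sing$ is psh on all of $U$ (the paper cites \cite[Theorem 1.10]{D85} for exactly the step you sketch via the normalization). No gaps.
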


\begin{proof}
Clearly $(ii)$ implies $(i)$. Assume that $\varphi$ is $\alpha$-psh, $V\subset X$ is open and $\rho$ is a smooth function on $V$ such that $\alpha=dd^c\rho$. Let $x\in V$. Then there exists an open set $U\subset V$ with $x\in U$ and a smooth function $\rho_1$ on $U$ such that $\alpha=dd^c\rho_1$ and $\rho_1+\varphi$ is psh on $U$. The function $\chi=\rho-\rho_1$ is smooth on $U$ and $dd^c\chi=0$ on $U\cap X_\reg$, so $\chi$ is pluriharmonic on $U\cap X_\reg$. We infer by \cite[Theorem 1.10]{D85} that $\chi$ is psh on $U$, since it is continuous at the points of $U\cap X_\sing$. Hence $\rho+\varphi=\rho_1+\varphi+\chi$ is psh on $U$.
\end{proof}

The next proposition follows easily from Brelot's theorem \cite[Theorem 1.7]{D85}.

\begin{Proposition}\label{P:Brelot}
Assume that $X$ is locally irreducible, $Y\subset X$ is an analytic subset with empty interior, and $\alpha$ is a smooth, real, locally $dd^c$-exact $(1,1)$-form on $X$. If $\varphi$ is an $\alpha$-psh function on $X\setminus Y$ that is locally upper bounded in a neighborhood of each point of $Y$ then $\varphi$ has a unique extension to an $\alpha$-psh function $\varphi^\star$ on $X$, defined by 
\[\varphi^\star(x)=\limsup_{X\setminus Y\ni y\to x}\varphi(y),\;x\in Y.\]
\end{Proposition}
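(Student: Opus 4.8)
The plan is to reduce the statement to the classical Brelot extension theorem on domains in $\C^N$, applied through local embeddings, plus the qualitative characterization of $\alpha$-psh functions furnished by Proposition \ref{P:qpsh}. First I would fix a point $x_0\in Y$ and a local embedding $\iota\colon U\to G\subset\C^N$ with $x_0\in U$; after shrinking $U$ I may assume there is a smooth function $\rho$ on $U$ with $\alpha=dd^c\rho$ (such $\rho$ exists near every point because $\alpha$ is locally $dd^c$-exact). By Proposition \ref{P:qpsh} applied on $U\setminus Y$, the function $u:=\rho+\varphi$ is psh on $(U\setminus Y)$; moreover $u$ is locally upper bounded near each point of $U\cap Y$ because $\rho$ is continuous and $\varphi$ is locally upper bounded there by hypothesis. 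Since $X$ is locally irreducible, $U$ may be taken irreducible, and $U\cap Y$ is an analytic subset of $U$ with empty interior, so Brelot's theorem \cite[Theorem 1.7]{D85} (which is stated exactly for this complex-space setting) produces a psh extension $u^\star$ of $u$ to $U$, given pointwise on $U\cap Y$ by the $\limsup$ over $U\setminus Y$. Setting $\varphi^\star:=u^\star-\rho$ on $U$ gives an $\alpha$-psh extension of $\varphi$ on $U$, again by Proposition \ref{P:qpsh}.

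The next step is to check that the pointwise formula for $\varphi^\star$ does not depend on the chart, so that the local extensions glue to a global function. Because $\rho$ is continuous at points of $U\cap Y$, the formula $u^\star(x)=\limsup_{U\setminus Y\ni y\to x}u(y)$ translates directly into $\varphi^\star(x)=\limsup_{U\setminus Y\ni y\to x}\varphi(y)$, and the right-hand side is manifestly intrinsic to $X$ (it only involves the topology of $X$ and the values of $\varphi$ on $X\setminus Y$). Hence on overlaps of two such charts the two locally defined extensions agree on the regular overlap and on $Y$ by the same formula, so there is a well-defined function $\varphi^\star\colon X\to[-\infty,\infty)$ extending $\varphi$, which is $\alpha$-psh in a neighborhood of every point, hence $\alpha$-psh on $X$. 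I would also note $\varphi^\star$ is not identically $-\infty$ on any open set since $\varphi$ already has that property on the dense open set $X\setminus Y$.

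Finally, uniqueness: if $\psi$ is any $\alpha$-psh extension of $\varphi$, then in a chart as above $\rho+\psi$ is psh on $U$ (Proposition \ref{P:qpsh}) and agrees with the psh function $u$ on $U\setminus Y$; a psh function is upper semicontinuous and equals the $\limsup$ of its restriction to any dense open subset (here one uses that $U\cap Y$ is pluripolar, or simply upper semicontinuity together with the sub-mean-value inequality on $U_{\reg}$ and continuity of psh functions across $Y$ in the precise sense of \cite[Theorem 1.7]{D85}), forcing $\rho+\psi=u^\star$ on $U$, i.e. $\psi=\varphi^\star$.

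The only genuine subtlety — and the step I would treat most carefully — is the hypothesis of local irreducibility: it is exactly what makes $U\cap Y$ have empty interior in \emph{every} irreducible local piece and what allows Brelot's theorem to be invoked chart by chart without worrying about the function being pieced together inconsistently across branches at a singular point. Everything else is a routine transfer through local embeddings of the Euclidean statement, using the continuity of the local potentials $\rho$ to identify the two $\limsup$ formulas.
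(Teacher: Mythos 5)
Your proposal is correct and follows essentially the same route as the paper: pass to a local potential $\rho$ with $\alpha=dd^c\rho$, use Proposition \ref{P:qpsh} to see that $\rho+\varphi$ is psh off $Y$, apply Brelot's theorem \cite[Theorem 1.7]{D85} to extend it, and use the continuity of $\rho$ to convert the $\limsup$ formula for the psh extension into the stated intrinsic formula for $\varphi^\star$. The extra care you take with chart-independence and uniqueness is sound but amounts to the same observations the paper leaves implicit.
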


\begin{proof}
Let $V\subset X$ be an open set such that $\alpha=dd^c\rho$ for some smooth function $\rho$ on $V$. It follows by Proposition \ref{P:qpsh} that $v:=\rho+\varphi$ is psh on $V\setminus Y$. By \cite[Theorem 1.7]{D85}, $v$ has a unique extension to a psh function $v^\star$ on $V$, given by 
\[v^\star(x)=\limsup_{V\setminus Y\ni y\to x}v(y)=\rho(x)+\limsup_{X\setminus Y\ni y\to x}\varphi(y),\;x\in V\cap Y.\]
\end{proof}

If $\varphi:X\to[-\infty,+\infty)$ is an upper semicontinuous function we define  
\begin{align}
\PSH(X,\alpha,\varphi)&:=\{u\in\PSH(X,\alpha):
\,u\leq\varphi \text{ on } X\}, \label{e:class}\\
V_\varphi&:=\sup\{u:\,u\in\PSH(X,\alpha,\varphi)\}. 
\label{e:env}
\end{align}
The function $V_\varphi$ is called the
\textit{$\alpha$-psh envelope of $\varphi$}.
Provided that $\PSH(X,\alpha,\varphi)\neq\emptyset$, we now show that 
$V_\varphi$ is the largest $\alpha$-psh function dominated by $\varphi$ on $X$.

\begin{Proposition}\label{P:env}
Assume that $X$ is locally irreducible and $\alpha$ is a smooth, real,
locally $dd^c$-exact $(1,1)$-form on $X$. Let $\varphi$
be an upper semicontinuous function on $X$ such that the set
$\PSH(X,\alpha,\varphi)\neq\emptyset$.
Then $V_\varphi\in\PSH(X,\alpha,\varphi)$.
\end{Proposition}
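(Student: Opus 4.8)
The plan is to adapt the classical argument that upper envelopes of (quasi-)plurisubharmonic functions are themselves (quasi-)plurisubharmonic, the main subtlety being that $X$ is singular, so the usual Choquet-type upper semicontinuous regularization has to be carried out via local embeddings and controlled across $X_\sing$ using Propositions \ref{P:qpsh} and \ref{P:Brelot}.

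\begin{proof}[Proof sketch]
First I would pass to the upper semicontinuous regularization $V_\varphi^\star$ of $V_\varphi$, defined by $V_\varphi^\star(x)=\limsup_{y\to x}V_\varphi(y)$. Since $\varphi$ is upper semicontinuous and every $u\in\PSH(X,\alpha,\varphi)$ satisfies $u\leq\varphi$, one gets $V_\varphi\leq V_\varphi^\star\leq\varphi$ on $X$, the last inequality because $\varphi$ is upper semicontinuous. So it suffices to show $V_\varphi^\star\in\PSH(X,\alpha)$: then $V_\varphi^\star\in\PSH(X,\alpha,\varphi)$, hence $V_\varphi^\star\leq V_\varphi$ by the definition of the supremum, forcing $V_\varphi=V_\varphi^\star$ and finishing the proof.

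To verify that $V_\varphi^\star$ is $\alpha$-psh I would use the criterion of Proposition \ref{P:qpsh}(ii): fix an open $V\subset X$ and a smooth $\rho$ on $V$ with $\alpha=dd^c\rho$, and show $\rho+V_\varphi^\star$ is psh on $V$. On $V\cap X_\reg$ this is the standard fact that the upper semicontinuous regularization of an upper envelope of a nonempty family of psh functions (here $\{\rho+u:u\in\PSH(X,\alpha,\varphi)\}$, all $\leq\rho+\varphi$ which is locally bounded above) is psh; one works in a local embedding chart and invokes the classical Choquet lemma to reduce to a countable subfamily and then the standard regularization theorem. The point to check is that $\rho+V_\varphi^\star=(\rho+V_\varphi)^\star$ on $V$, which holds because $\rho$ is continuous. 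This gives that $v:=\rho+V_\varphi^\star$ is psh on $V\cap X_\reg$. Since $v$ is also locally bounded above near points of $V\cap X_\sing$ (being $\leq\rho+\varphi$ locally), it is weakly psh there, and by \cite[Theorem 1.10]{D85} (as already used in the proof of Proposition \ref{P:qpsh}) a weakly psh function that is locally bounded above extends/coincides with a genuine psh function on $V$; here local irreducibility of $X$ is what makes the singular points removable. Hence $\rho+V_\varphi^\star$ is psh on all of $V$.

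The main obstacle I anticipate is the bookkeeping at $X_\sing$: one must be careful that the candidate function $V_\varphi^\star$, which is defined by a $\limsup$ taken in $X$ (not just in $X_\reg$), actually agrees with the psh extension produced by the removable-singularity theorem. This is where Proposition \ref{P:Brelot}, with $Y=X_\sing$ (an analytic subset with empty interior, since $X$ is irreducible and normal), does the work: it guarantees the extension across $X_\sing$ is unique and given exactly by the $\limsup$ formula, so no mismatch can occur. A secondary point is to make sure the Choquet-lemma step is applied correctly on a space rather than in $\C^N$; this is handled by pulling back through a local embedding and using that psh-ness is a local, embedding-testable property by definition. The hypothesis $\PSH(X,\alpha,\varphi)\neq\emptyset$ is used precisely to ensure the family over which we take the supremum is nonempty, so that $V_\varphi\not\equiv-\infty$ and the regularization is a bona fide (quasi-)psh function rather than the constant $-\infty$.
\end{proof}
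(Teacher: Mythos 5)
Your outline follows essentially the same route as the paper: regularize to $V_\varphi^\star$, observe $V_\varphi^\star\leq\varphi$ by upper semicontinuity of $\varphi$, invoke the classical envelope theorem on $X_\reg$, and get across $X_\sing$ via the Brelot-type extension of Proposition \ref{P:Brelot}. The paper organizes this slightly differently -- it introduces $V_1$, the Proposition \ref{P:Brelot} extension of $V_\varphi^\star|_{X_\reg}$, notes $V_1\in\PSH(X,\alpha,\varphi)$ so that $V_1\leq V_\varphi$, and then proves the reverse inequality $V_\varphi\leq V_1$ -- but the architecture is the same.

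The one place where your sketch is not yet a proof is the ``no mismatch can occur'' step. You need $V_\varphi^\star(x)=\limsup_{X_\reg\ni y\to x}V_\varphi^\star(y)$ for $x\in X_\sing$, and you justify this by appealing to the uniqueness and the limsup formula in Proposition \ref{P:Brelot}. But that uniqueness statement applies only to functions already known to be $\alpha$-psh on all of $X$, and at this stage $V_\varphi^\star$ is exactly the function whose $\alpha$-plurisubharmonicity you are trying to establish; as written the argument is circular. Moreover, since $V_\varphi$ is a supremum and its values on $X_\sing$ enter the $\limsup$ defining $V_\varphi^\star$, one must genuinely rule out that $V_\varphi$ is ``too large'' along $X_\sing$. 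The fix, which is what the paper does, is to apply the limsup identity to each competitor separately: for every $u\in\PSH(X,\alpha,\varphi)$ and every $x\in X_\sing$ one has $u(x)=\limsup_{X_\reg\ni y\to x}u(y)\leq\limsup_{X_\reg\ni y\to x}V_\varphi^\star(y)$ by Proposition \ref{P:Brelot} applied to $u$; taking the supremum over $u$ shows that the values of $V_\varphi$ on $X_\sing$ are controlled by the regular-part limsup, whence $\limsup_{y\to x}V_\varphi(y)=\limsup_{X_\reg\ni y\to x}V_\varphi(y)$ and your identification goes through. Two cosmetic remarks: normality is not needed for $X_\sing$ to have empty interior (reducedness suffices, and only local irreducibility is assumed here); and the removable-singularity statement doing the work is the one behind Proposition \ref{P:Brelot}, namely \cite[Theorem 1.7]{D85}, rather than \cite[Theorem 1.10]{D85}, which in the paper is invoked for functions continuous at the singular points and so does not apply directly to $V_\varphi^\star$.
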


\begin{proof}
Let $V_\varphi^\star$ denote the upper semicontinuous regularization
of $V_\varphi$ on $X$. Since $\varphi$ is upper semicontinuous,
we have $V_\varphi^\star\leq\varphi$ 
on $X$. Moreover, it is well known that $V_\varphi^\star$ is
$\alpha$-psh on $X_\reg$ (see e.g.\ \cite{Kl91,GZ05}).

Let $V_1$ denote the unique $\alpha$-psh extension of $V_\varphi^\star|_{X_\reg}$ 
to $X$ provided by Proposition \ref{P:Brelot}. Using again the upper semicontinuity 
of $\varphi$ we see that $V_1\leq\varphi$ on $X$. So $V_1\in\PSH(X,\alpha,\varphi)$ 
and hence $V_1\leq V_\varphi$ on $X$. The proposition follows if we show that 
$V_\varphi\leq V_1$. Note that $V_\varphi\leq V_\varphi^\star=V_1$
holds on $X_\reg$. 
Let $x\in X_\sing$ and $u\in\PSH(X,\alpha,\varphi)$.
Using Proposition \ref{P:Brelot} 
we infer that 
\[u(x)=\limsup_{X_\reg\ni y\to x}u(y)\leq
\limsup_{X_\reg\ni y\to x}V_\varphi^\star(y)=V_1(x).\]
Hence $V_\varphi\leq V_1$ holds on $X_\sing$.
\end{proof}

We remark that $\PSH(X,\alpha,\varphi)\neq\emptyset$ if $X$ is compact, 
$\varphi:X\to\R$ is bounded, and $\PSH(X,\alpha)\neq\emptyset$. Indeed, if 
$u\in\PSH(X,\alpha)$ then $u$ is bounded above since $X$ is compact, 
so $u-M\leq\varphi$ for some constant $M$. 

We assume next that $X$ is a compact complex space, $Y\subset X$ is an analytic 
subset with empty interior such that $X_\sing\subset Y$, $\wi X$ is a compact complex 
manifold and $\pi:\wi X\to X$ is a surjective holomorphic map such that 
$\pi:\wi X\setminus E\to X\setminus Y$ is a biholomorphism, where $E=\pi^{-1}(Y)$ has 
empty interior. If $\alpha$  is a smooth, real, locally $dd^c$-exact $(1,1)$-form on $X$, 
we let $\wi\alpha=\pi^\star\alpha$ and consider the maps 
\begin{align}
&\pi^\star:\PSH(X,\alpha)\to\PSH(\wi X,\wi\alpha),\;\pi^\star u=
u\circ\pi, \label{e:pull} \\
&\pi_\star:\PSH(\wi X,\wi\alpha)\to\PSH(X,\alpha),\;
\pi_\star\wi u(x)=\left\{\begin{array}{ll}
\wi u\circ\pi^{-1}(x) & \mbox{if } x\in X\setminus Y,\\
\displaystyle{\limsup_{X\setminus Y\ni y\to x}\wi u\circ\pi^{-1}(y)} & \mbox{if } x\in Y.
\end{array}\right. \label{e:push}
\end{align}
Note that the map $\pi_\star$ is well defined by Proposition \ref{P:Brelot}. 
We have the following:

\begin{Proposition}\label{P:blow}
In the above setting, assume further that $X$ is a locally irreducible. Then the maps $\pi^\star$, $\pi_\star$ defined in \eqref{e:pull}, \eqref{e:push} are bijective and $\pi^\star=(\pi_\star)^{-1}$. Moreover, if $\varphi$ is an upper semicontinuous function on $X$ such that $\PSH(X,\alpha,\varphi)\neq\emptyset$ and $\wi\varphi=\varphi\circ\pi$, then $\pi^\star V_\varphi=V_{\wi\varphi}$.
\end{Proposition}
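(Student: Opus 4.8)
The plan is to establish the two assertions in turn: first that $\pi^\star$ and $\pi_\star$ are mutually inverse bijections between $\PSH(X,\alpha)$ and $\PSH(\wi X,\wi\alpha)$, and then the envelope compatibility $\pi^\star V_\varphi = V_{\wi\varphi}$.

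For the bijectivity, I would first check that $\pi^\star$ and $\pi_\star$ really land in the claimed spaces. For $\pi^\star$, if $u\in\PSH(X,\alpha)$ then locally $\rho+u$ is psh on $X$, where $\alpha=dd^c\rho$; pulling back by the holomorphic map $\pi$ gives that $(\rho\circ\pi)+(u\circ\pi)$ is psh on $\wi X$ and $dd^c(\rho\circ\pi)=\pi^\star\alpha=\wi\alpha$, so $\pi^\star u\in\PSH(\wi X,\wi\alpha)$ (using Proposition \ref{P:qpsh}). For $\pi_\star$, given $\wi u\in\PSH(\wi X,\wi\alpha)$, the function $\wi u\circ\pi^{-1}$ is $\alpha$-psh on $X\setminus Y$ and is locally bounded above near $Y$ (since $\wi u$ is bounded above on the compact $\wi X$), so Proposition \ref{P:Brelot} applies and yields an $\alpha$-psh extension to $X$, which is exactly $\pi_\star\wi u$. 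Next, $\pi_\star\circ\pi^\star=\mathrm{id}$ on $\PSH(X,\alpha)$ is immediate on $X\setminus Y$, and on $Y$ it follows from the uniqueness clause of Proposition \ref{P:Brelot} together with the fact that $u$ itself is an $\alpha$-psh extension of $u|_{X\setminus Y}$, hence equals $\pi_\star\pi^\star u$. For the reverse composition $\pi^\star\circ\pi_\star=\mathrm{id}$ on $\PSH(\wi X,\wi\alpha)$, on $\wi X\setminus E$ this is clear; on $E$ one has $\pi^\star\pi_\star\wi u = (\pi_\star\wi u)\circ\pi$, and since both $\wi u$ and $(\pi_\star\wi u)\circ\pi$ are $\wi\alpha$-psh on $\wi X$ agreeing off the nowhere-dense set $E$, they coincide everywhere by continuity of the psh extension (Brelot/Proposition \ref{P:Brelot} applied on $\wi X$, which is automatically locally irreducible). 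This also gives $\pi^\star=(\pi_\star)^{-1}$.

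For the envelope identity, I would argue by two inequalities. For $\pi^\star V_\varphi\le V_{\wi\varphi}$: since $V_\varphi\in\PSH(X,\alpha,\varphi)$ by Proposition \ref{P:env} (note $\PSH(X,\alpha,\varphi)\neq\emptyset$ is assumed), we get $\pi^\star V_\varphi\in\PSH(\wi X,\wi\alpha)$ and $\pi^\star V_\varphi = V_\varphi\circ\pi\le\varphi\circ\pi=\wi\varphi$ on $\wi X$, so $\pi^\star V_\varphi$ is a competitor in the envelope defining $V_{\wi\varphi}$, whence $\pi^\star V_\varphi\le V_{\wi\varphi}$. For the reverse inequality, note first that $\PSH(\wi X,\wi\alpha,\wi\varphi)\neq\emptyset$ (it contains $\pi^\star u$ for any $u\in\PSH(X,\alpha,\varphi)$), so by Proposition \ref{P:env} applied on $\wi X$ we have $V_{\wi\varphi}\in\PSH(\wi X,\wi\alpha,\wi\varphi)$. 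Then $\pi_\star V_{\wi\varphi}\in\PSH(X,\alpha)$ by the already-established map properties, and I must check $\pi_\star V_{\wi\varphi}\le\varphi$ on $X$: on $X\setminus Y$ this reads $V_{\wi\varphi}\circ\pi^{-1}\le\varphi$, which holds because $V_{\wi\varphi}\le\wi\varphi=\varphi\circ\pi$; on $Y$ one uses the definition of $\pi_\star$ as a $\limsup$ over $X\setminus Y$ together with the upper semicontinuity of $\varphi$ to conclude $\pi_\star V_{\wi\varphi}(x)\le\limsup_{X\setminus Y\ni y\to x}\varphi(y)\le\varphi(x)$. Hence $\pi_\star V_{\wi\varphi}\in\PSH(X,\alpha,\varphi)$, so $\pi_\star V_{\wi\varphi}\le V_\varphi$, and applying $\pi^\star$ (which is order-preserving) and using $\pi^\star\pi_\star=\mathrm{id}$ gives $V_{\wi\varphi}\le\pi^\star V_\varphi$.

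The main obstacle, and the place requiring care, is the behavior over the exceptional set: making sure the $\limsup$-extension in the definition of $\pi_\star$ genuinely produces an $\alpha$-psh function and interacts correctly with the upper semicontinuity of $\varphi$ on $Y$, and that the two compositions of $\pi^\star,\pi_\star$ really are the identity on the singular/exceptional loci rather than merely off them. Both points are handled by the uniqueness statement in Proposition \ref{P:Brelot} (a psh, hence $\alpha$-psh, function is determined by its restriction to the complement of a nowhere-dense analytic set via its $\limsup$), so the proof reduces to invoking that proposition in the right places; no delicate estimates are needed.
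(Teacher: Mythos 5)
Your proposal is correct and follows essentially the same route as the paper: both compositions are identified with the identity via the uniqueness clause of Proposition \ref{P:Brelot} (the paper phrases the $\pi^\star\pi_\star=\mathrm{id}$ step as two $\wi\alpha$-psh functions agreeing a.e., which is the same point), and the envelope identity is obtained from the same two inequalities, using that $\pi^\star V_\varphi$ is a competitor for $V_{\wi\varphi}$ and that $\pi_\star V_{\wi\varphi}\leq\varphi$ extends from $X\setminus Y$ to $X$ by upper semicontinuity. The extra verifications you include (that $\pi^\star$ and $\pi_\star$ land in the claimed spaces) are handled in the paper in the paragraph preceding the proposition.
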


\begin{proof}
If $\wi u\in\PSH(X,\wi\alpha)$, then $\pi^\star\pi_\star\wi u=\wi u$ since they are $\wi\alpha$-psh functions and are equal on $\wi X\setminus E$, so a.e.\ on $\wi X$.  If $u\in\PSH(X,\alpha)$, then $\pi_\star\pi^\star u=u$ on $X\setminus Y$ and hence on $X$, by the uniqueness assertion of Proposition \ref{P:Brelot}. For the last claim, using Proposition \ref{P:env} we infer that $\pi^\star V_\varphi\leq V_{\wi\varphi}$ on $\wi X$. Moreover, $\pi_\star V_{\wi\varphi}\leq\varphi$ holds on $X\setminus Y$, and hence on $X$ since $\varphi$ is upper semicontinuous. Hence $\pi_\star V_{\wi\varphi}\leq V_\varphi$ and the conclusion follows.
\end{proof}

\section{Big line bundles and Moishezon spaces}\label{S:big}

In this section we recall the definition and some properties of Moishezon spaces 
and big line bundles.

Let $X$ be a (reduced) compact irreducible complex space of dimension $n$, 
and $\mathcal K(X)$ be the field of meromorphic functions on $X$. The 
\textit{algebraic dimension} $a(X)$ of $X$ is defined as the transcendence 
degree of $\mathcal K(X)$ over $\C$ (see e.g.\ \cite[Definition 3.2]{U75}). 
Equivalently, $a(X)$ is the maximal number of algebraically independent 
meromorphic functions on $X$. If two compact irreducible complex spaces 
$X_1$, $X_2$ are bimeromorphically equivalent then their meromorphic 
function fields $\mathcal K(X_1)$, $\mathcal K(X_2)$ are isomorphic. 
Hence the algebraic dimension $a(X)$ is a bimeromorphic invariant.
A theorem of Thimm and Remmert shows that $a(X)\leq\dim X$ 
(see e.g.\ \cite[Theorem 3.1]{U75}).

One calls $X$ a \textit{Moishezon space} if $a(X)=\dim X$ \cite[Definition 3.5]{U75}. 
By a theorem of Moishezon \cite{Moi66}, $X$ is then bimeromorphically equivalent 
to a projective manifold. Combining this with Hironaka's theorems on resolution of 
singularities we obtain the following theorem that will be needed later. 

\begin{Theorem}\label{T:Moi}
If $X$ is a compact, irreducible, Moishezon space then there exists a connected 
projective manifold $\wi X$ and a surjective holomorphic map $\pi:\wi X\to X$, 
given as a composition of finitely many blow-ups with smooth center, such that 
$\pi:\wi X\setminus E\to X\setminus Y$ is a biholomorphism, where $Y$ is an 
analytic subset of $X$, $X_\sing\subset Y$, and $E=\pi^{-1}(Y)$ is a divisor 
with only normal crossings. Moreover, if $\codim X_\sing\geq2$ then 
$\codim Y\geq2$. 
\end{Theorem}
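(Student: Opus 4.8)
The plan is to combine Moishezon's theorem with Hironaka's resolution/flattening results and then argue the codimension statement separately. First I would invoke Moishezon's theorem \cite{Moi66}: since $X$ is compact, irreducible and $a(X)=\dim X$, there is a projective manifold $X'$ and a bimeromorphic map $f:X'\dashrightarrow X$. The map $f$ is a priori only meromorphic, so the next step is to resolve its indeterminacy. By Hironaka's theorem on elimination of indeterminacies (the graph of $f$ can be resolved), there exists a manifold $\wi X$ obtained from $X'$ by a finite sequence of blow-ups with smooth centers, together with a holomorphic map $\wi\pi:\wi X\to X'$ and a holomorphic map $\pi:\wi X\to X$ with $\pi=f\circ\wi\pi$ on the locus where $f$ is defined; since a composition of blow-ups of a projective manifold is again projective, $\wi X$ is projective, and we may take it connected because $X$ is irreducible. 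The map $\pi$ is surjective and bimeromorphic.

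Next I would arrange the exceptional locus to be a normal crossings divisor over a thin analytic subset. Let $Y_0\subset X$ be the smallest analytic set such that $\pi$ is an isomorphism over $X\setminus Y_0$; enlarge it to $Y:=Y_0\cup X_\sing$, which is still analytic with empty interior since $\pi$ is bimeromorphic and $X$ is irreducible and reduced. Then $E:=\pi^{-1}(Y)$ has empty interior. To make $E$ a normal crossings divisor I would apply Hironaka again: blow up $\wi X$ further along smooth centers lying over $E$ so that the total transform of $E$ (equivalently, the preimage of $Y$) becomes a simple normal crossings divisor; these extra blow-ups do not change $\wi X\setminus E\to X\setminus Y$, so the biholomorphism property is preserved, and the whole map remains a composition of finitely many blow-ups with smooth center.

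For the final assertion, suppose $\codim X_\sing\ge 2$. The point is that the ``new'' part of $Y$, namely $Y_0=$ the non-isomorphism locus downstairs, already has codimension $\ge 2$: indeed $\pi$ restricted to $\wi X\setminus E$ is an isomorphism, and if some component of $Y_0$ had codimension $1$ in $X$ it would meet $X_\reg$ (as $\codim X_\sing\ge 2$), contradicting the fact that a bimeromorphic morphism from a manifold to a normal complex space is an isomorphism in codimension one on the target — equivalently, the indeterminacy locus of the inverse bimeromorphic map $X\dashrightarrow\wi X$ has codimension $\ge 2$ by normality of $X$ and Zariski's main theorem / the Riemann extension theorem on $X_\reg$. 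Hence $Y=Y_0\cup X_\sing$ has $\codim Y\ge 2$.

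The main obstacle, I expect, is the codimension-one statement: one must genuinely use that $X$ is \emph{normal} to know that a proper bimeromorphic morphism $\pi:\wi X\to X$ with $\wi X$ smooth cannot contract a divisor onto a subvariety of codimension $1$, i.e.\ that $\pi^{-1}$ is defined off a set of codimension $\ge 2$ in $X_\reg$. Over the regular locus this is the classical fact that a rational map from a normal variety to a projective variety is defined away from codimension $2$; one then only has to check that adjoining $X_\sing$ does not lower the codimension, which holds by hypothesis. The rest is an essentially formal assembly of Moishezon's and Hironaka's theorems.
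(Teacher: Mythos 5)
Your construction establishes everything in the statement except one clause, and that clause is where the gap lies: the theorem asserts that $\pi:\wi X\to X$ is \emph{given as a composition of finitely many blow-ups with smooth center}, i.e.\ a tower of monoidal transformations starting from $X$ itself. In your plan the tower of blow-ups is erected over the projective model $X'$ (to eliminate the indeterminacy of $f:X'\dashrightarrow X$), and the map down to $X$ is $\pi=f\circ\wi\pi$, where $f$ is merely a bimeromorphic map coming from the weak form of Moishezon's theorem. Such a $\pi$ is a proper bimeromorphic morphism, but nothing in your argument exhibits it as a succession of blow-ups of $X$. The paper secures this clause by reversing the order of operations: it first applies Hironaka's desingularization to $X$, obtaining $\sigma_1:\wih X\to X$ as a composition of blow-ups with smooth centers with $\sigma_1:\wih X\setminus E_1\to X_\reg$ biholomorphic; it then observes $a(\wih X)=a(X)$, so $\wih X$ is a Moishezon \emph{manifold}, and invokes Moishezon's theorem in its strong form (\cite[Theorem 3.6]{U75}): a Moishezon manifold becomes projective after finitely many further blow-ups with smooth centers $\sigma_2:\wi X\to\wih X$, with $\codim Y_1\geq2$ for the locus $Y_1$ over which $\sigma_2$ is not an isomorphism. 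Setting $\pi=\sigma_1\circ\sigma_2$ then genuinely realizes $\pi$ as a composition of blow-ups over $X$, and a final application of embedded resolution to $(\wi X,E)$ (which you also perform) keeps this structure. If you want to follow your route, you would need to replace ``$X$ is bimeromorphic to a projective manifold'' by this stronger statement applied to a smooth model of $X$, which is essentially forced back into the paper's argument.

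The remainder of your proposal is sound and in one respect takes a different path worth noting. For the codimension claim, the paper gets $\codim Y_1\geq2$ for free from Moishezon's theorem and pushes it down to $\codim\sigma_1(Y_1)\geq2$ by Remmert's proper mapping theorem, then adjoins $X_\sing$. You instead derive the bound from Zariski's main theorem: over $X_\reg$ the target is smooth, hence normal, so a proper bimeromorphic morphism from a manifold cannot fail to be an isomorphism over a codimension-one subset of $X_\reg$, and adjoining $X_\sing$ costs nothing under the hypothesis $\codim X_\sing\geq2$. That argument is correct (and, as you note, only uses normality on $X_\reg$, where it is automatic), but it is an add-on the paper does not need because the codimension bound is already built into the version of Moishezon's theorem it cites.
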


\begin{proof}
By Hironaka's theorem (see e.g.\ \cite[Theorem 13.2]{BM97}), there exists a compact 
complex manifold $\wih X$ and a surjective holomorphic map $\sigma_1:\wih X\to X$, 
given as a composition of finitely many blow-ups with smooth center, such that 
$\sigma_1:\wih X\setminus E_1\to X_\reg$ is a biholomorphism, where 
$E_1=\sigma_1^{-1}(X_\sing)$ is a divisor in $\wih X$. Since $a(\wih X)=a(X)$,
 it follows that $\wih X$ is a Moishezon manifold. By Moishezon's theorem 
 (\cite{Moi66}, \cite[Theorem 3.6]{U75}), there exists a projective 
manifold $\wi X$ and a surjective holomorphic map $\sigma_2:\wi X\to\wih X$, 
given as a composition of finitely many blow-ups with smooth center, such that 
$\sigma_2:\wi X\setminus E_2\to\wih X\setminus Y_1$ is a biholomorphism, 
where $Y_1\subset\wih X$ is an analytic subset with $\codim Y_1\geq2$ and 
$E_2=\sigma_2^{-1}(Y_1)$ is a divisor in $\wi X$. We let 
\[\pi=\sigma_1\circ\sigma_2:\wi X\to X,\;Y=\sigma_1(Y_1)\cup X_\sing,\; 
E=\pi^{-1}(Y)=E_2\cup\sigma_2^{-1}(E_1).\]
By Remmert's theorem, $\sigma_1(Y_1)$ is an analytic subset of $X$ and 
$\codim\sigma_1(Y_1)\geq2$. Applying Hironaka's embedded resolution of 
singularities theorem (see e.g.\ \cite[Theorem 1.6]{BM97}) to $(\wi X,E)$, we may 
assume that $E$ is a normal crossings divisor and the conclusion follows.
\end{proof}

Let next $X$ be a compact irreducible complex space of dimension $n$ 
and $L$ be a holomorphic line bundle on $X$. Recall that a Hermitian form on 
$X$ is a smooth $(1,1)$-form $\omega$ such that for every point $x \in X$ there 
exist a local embedding $\iota:\ U\ni x \to G \subset\C^N$ and a Hermitian form 
$\tilde\omega$ on $G$ with $\omega=\iota^\star\tilde\omega$ on $U \cap X_\reg$. 
We have that $\omega^n/n!$ gives locally an area measure on $X$. The notions 
of \textit{singular Hermitian metric} $h$ on $L$ and 
\textit{curvature current} $c_1(L,h)$ 
of $h$ are defined as in the case when $X$ is smooth \cite{D90}. If $e_U$ is a 
holomorphic frame of $L$ on some open set $U$ then $|e_U|_h=e^{-\varphi_U}$, 
where $\varphi_U\in L^1_{loc}(U)$ is called a \textit{local weight} of $h$, and 
$c_1(L,h)\mid_U=dd^c\varphi_U$. 

The volume of $L$ is defined by (see e.g.\ \cite[Definition 2.2.31]{La1})
\begin{equation}\label{e:vol}
\displaystyle\Vol(L)=\limsup_{p\to\infty}\frac{\dim H^0(X,L^p)}{p^n/n!}.
\end{equation}

The \textit{Kodaira-Iitaka dimension} $\kappa(X,L)$ 
of $L$ is defined as follows (see e.g.\ \cite[Definition 2.1.3]{La1}, 
\cite[Definition 5.1]{U75}). If $X$ is normal then
\[\kappa(X,L)=\max\big\{\dim\Phi_p(X):\,p\geq 1,\,\dim H^0(X,L^p)\geq 1\big\}.\]
Here $\Phi_p$ is the Kodaira map  
\begin{equation}\label{e:Kodaira}
\begin{split}
&\Phi_p:X\dashrightarrow \P(H^0(X,L^p)^\star)\,,\\
&\Phi_p(x)=
\{S\in H^0(X,L^p):\,S(x)=0\},\;x\in X\setminus \Bs(H^0(X,L^p))\,,
\end{split}
\end{equation}
where a point in  $\P(H^0(X,L^p)^\star)$ is identified with a hyperplane 
through the origin in $H^0(X,L^p)$ and 
\[\Bs(H^0(X,L^p))= \{x \in X :\,S(x) = 0,\,\forall\,S\in H^0(X,L^p)\}\] 
is the base locus of $H^0(X,L^p)$. If $X$ is not normal and $\sigma:X^\star\to X$ is 
the normalization of $X$ then one defines 
\[\kappa(X,L)=\kappa(X^\star,\sigma^\star L).\]

Let us recall some important properties of the Kodaira-Iitaka dimension. 
If $Y$ is a (reduced) compact irreducible complex space and 
$f:Y\dashrightarrow X$ is a bimeromorphic map then 
\begin{equation}\label{e:KIinv}
\kappa(Y,f^\star L)=\kappa(X,L).
\end{equation}
By \cite[Theorem 5.13]{U75}, the same conclusion holds if $f:Y\to X$ is a 
surjective holomorphic map. Moreover, one always has 
(see e.g.\ \cite[Lemma 5.5]{U75})
\begin{equation}\label{e:KIA}
\kappa(X,L)\leq a(X)\leq\dim X.
\end{equation}

A line bundle $L$ is called \textit{big} if $\kappa(X,L)=\dim X$ 
\cite[Definition 2.2.1]{La1}. One has that $L$ is big if and 
only if $\Vol(L)>0$, provided that $X$ is a compact complex 
manifold (see e.g.\ \cite[Theorem 2.2.7]{MM07}) or $X$ 
is a projective variety (see e.g. \cite[Section 2.2]{La1}). 
The following propositions summarize some basic properties 
of big line bundles.

\begin{Proposition}\label{P:big1}
Let $(X,\omega)$ be as in (A1) and $L$ be a holomorphic line bundle on $X$. 
The following are equivalent:

(a) $L$ is big\hspace{0.25mm};

(b) $\Vol(L)>0$\hspace{0.25mm};

(c) There exists a singular Hermitian metric $g$ on $L$ such that 
$c_1(L,g)\geq\varepsilon\omega$ for some constant $\varepsilon>0$. 
\end{Proposition}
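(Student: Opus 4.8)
The plan is to reduce everything to known facts on a resolution of singularities. First I would fix, by Hironaka's theorem (as in the construction of $\sigma_1$ in the proof of Theorem \ref{T:Moi}), a compact complex manifold $\wi X$ and a surjective holomorphic modification $\pi\colon\wi X\to X$ which is biholomorphic over $X\setminus Y$ for some analytic set $Y\supset X_\sing$ with empty interior, with $E=\pi^{-1}(Y)$ a divisor. Since $X$ is normal, $\pi_\star\mO_{\wi X}=\mO_X$, so by the projection formula $\dim H^0(X,L^p)=\dim H^0(\wi X,\pi^\star L^p)$ for all $p$; hence $\Vol(L)=\Vol(\pi^\star L)$, and also $\kappa(X,L)=\kappa(\wi X,\pi^\star L)$ by \eqref{e:KIinv}. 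The equivalence (a)$\Leftrightarrow$(b) then follows at once from the corresponding statement on the compact complex \emph{manifold} $\wi X$ (\cite[Theorem 2.2.7]{MM07}).

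For (b)$\Rightarrow$(c) I would first observe that $\Vol(L)>0$ forces $\kappa(X,L)=n$, hence $a(X)=n$ by \eqref{e:KIA}, so $X$ is Moishezon and Theorem \ref{T:Moi} lets me take $\wi X$ to be \emph{projective}. Then $\pi^\star L$ is big on a projective manifold, so Kodaira's lemma provides $m\geq1$, an ample line bundle $A$ and an effective divisor $D$ on $\wi X$ with $(\pi^\star L)^m\cong A\otimes\mO_{\wi X}(D)$. Equipping $A$ with a smooth metric of K\"ahler curvature and $\mO_{\wi X}(D)$ with the singular metric of curvature current $[D]\geq0$, then taking the $m$-th root, I obtain a singular metric $\wi g$ on $\pi^\star L$ with $c_1(\pi^\star L,\wi g)\geq\varepsilon_1\,\pi^\star\omega$ for some $\varepsilon_1>0$ (using compactness of $\wi X$ and smoothness of $\pi^\star\omega$). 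Its local weights are quasi-psh on $\wi X$, hence locally bounded above near $E$; transporting $\wi g$ by the biholomorphism $\wi X\setminus E\cong X\setminus Y$ and extending the local weights over $Y$ via Proposition \ref{P:Brelot} yields a singular metric $g$ on $L$ with $c_1(L,g)=\pi_\star c_1(\pi^\star L,\wi g)$. Testing against a positive $(n-1,n-1)$-form $\phi$ on $X$ and using that $\pi$ is a modification,
\[\langle c_1(L,g),\phi\rangle=\langle c_1(\pi^\star L,\wi g),\pi^\star\phi\rangle\geq\varepsilon_1\,\langle\pi^\star\omega,\pi^\star\phi\rangle=\varepsilon_1\,\langle\omega,\phi\rangle,\]
so $c_1(L,g)\geq\varepsilon_1\,\omega$, which is (c). (Since (a)$\Leftrightarrow$(b), this also gives (a)$\Rightarrow$(c).)

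For (c)$\Rightarrow$(b) I would pull $g$ back to a Hironaka resolution $\wi X$ — now only a compact complex manifold, as Moishezonness is not yet available: $c_1(\pi^\star L,\pi^\star g)=\pi^\star c_1(L,g)\geq\varepsilon\,\pi^\star\omega\geq0$, while $\int_{\wi X}(\pi^\star\omega)^n=\int_X\omega^n>0$ since $\pi$ is a modification. Regularizing this current (Demailly) gives, for each small $\delta>0$, a metric $g_\delta$ on $\pi^\star L$ with analytic singularities and $c_1(\pi^\star L,g_\delta)\geq\varepsilon\,\pi^\star\omega-\delta\,\wi\omega$ for a fixed Hermitian form $\wi\omega$ on $\wi X$. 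On the open set $\{\pi^\star\omega\geq 2\delta\varepsilon^{-1}\wi\omega\}$, which avoids $E$ and whose $(\pi^\star\omega)^n$-mass tends to $\int_X\omega^n>0$ as $\delta\to0$, the smooth part of $c_1(\pi^\star L,g_\delta)$ is $\geq\tfrac{\varepsilon}{2}\pi^\star\omega\geq0$, so its top power dominates $(\varepsilon/2)^n(\pi^\star\omega)^n$. Bonavero's singular holomorphic Morse inequalities, applied on the locus where that smooth part has at most one negative eigenvalue, then give $\dim H^0(\wi X,(\pi^\star L)^p)\geq c_0\,p^n$ for $p$ large and some $c_0>0$, whence $\Vol(L)=\Vol(\pi^\star L)>0$. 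Alternatively, one may stay on $X$ and construct $\gtrsim p^n$ linearly independent peak sections of $L^p$ by solving $\db$-equations with $L^2$-estimates, using the strict positivity $c_1(L,g)\geq\varepsilon\omega$, as in \cite{D85,CM13,CMM17}.

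The routine-but-delicate point in (b)$\Rightarrow$(c) is the push-forward of the singular metric across $Y$, which is exactly what Proposition \ref{P:Brelot} is for. The genuine obstacle is (c)$\Rightarrow$(b): because $\pi^\star\omega$ degenerates along the exceptional divisor $E$, the pulled-back current is not a K\"ahler current on $\wi X$, so one must check that after regularization the region of semipositivity still carries positive $(\pi^\star\omega)^n$-volume before the Morse inequalities can be applied — or instead run the $L^2$-construction directly on $X$.
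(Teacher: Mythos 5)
Your overall strategy (pass to a resolution, identify sections upstairs and downstairs, reduce to the manifold case) matches the paper's, and the implications (a)$\Leftrightarrow$(b) and (b)$\Rightarrow$(c) are essentially sound. Two remarks on (b)$\Rightarrow$(c): the paper does not need a projective model or Kodaira's lemma, since Ji--Shiffman \cite{JS93} already produces a singular metric $\wih g$ with $c_1(\sigma^\star L,\wih g)\geq\delta\wih\omega$ on the (not necessarily projective) Hironaka resolution $\wih X$; and the paper extends the transported local weights across $X_\sing$ by working in a local embedding $\iota:U\to\C^N$, comparing $\omega$ with $\iota^\star dd^c\|z\|^2$ and applying the Grauert--Remmert extension theorem \cite{GR56}, which directly yields $dd^c\varphi_U\geq\varepsilon\omega$ on all of $U$. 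Your ``test against positive $(n-1,n-1)$-forms'' shortcut needs the extra observation that $c_1(L,g)-\pi_\star c_1(\pi^\star L,\wi g)$ is a closed $(1,1)$-current supported on the codimension-$\geq 2$ set $Y$, hence zero; this is fixable but the local extension argument is cleaner.

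The genuine gap is in (c)$\Rightarrow$(b). Bonavero's singular Morse inequality gives $h^0(\wi X,(\pi^\star L)^p)\geq \frac{p^n}{n!}\int_{X(\leq 1)}T_{\mathrm{ac}}^n-o(p^n)$, and $\int_{X(\leq 1)}T_{\mathrm{ac}}^n=\int_{X(0)}T_{\mathrm{ac}}^n-\int_{X(1)}|T_{\mathrm{ac}}^n|$: the index-one locus contributes \emph{negatively}. You bound the first term from below, but not the second. In the classical ``K\"ahler current $\Rightarrow$ big'' argument this term is harmless because after regularization the curvature remains $\geq\frac{\varepsilon}{2}\omega_0>0$ off the analytic singular set, so $X(1)$ is negligible; here, precisely because $\pi^\star\omega$ degenerates along $E$, the bound $c_1(\pi^\star L,g_\delta)\geq\varepsilon\pi^\star\omega-\delta\wi\omega$ allows a genuinely negative eigenvalue on a neighborhood of $E$ of positive volume, and estimating $\int_{X(1)}|T_{\mathrm{ac}}^n|$ would require an upper bound on the positive eigenvalues of $T_{\mathrm{ac}}$ there, which Demailly regularization does not supply. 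So the right-hand side of the Morse inequality has not been shown to be positive; you correctly sense that the degeneration along $E$ is the obstacle, but the volume of the semipositive region is not the issue --- the uncontrolled negative term is. The paper avoids this entirely by twisting: it chooses a metric $\theta$ on $F=\cO_{\wih X}(-E)$ so that $\Omega=C\sigma^\star\omega+c_1(F,\theta)$ is a genuine Hermitian form, whence $c_1(\sigma^\star L^m\otimes F,\sigma^\star g^m\otimes\theta)\geq\Omega$ is a true K\"ahler current for $m\varepsilon\geq C$; Ji--Shiffman then gives that $\sigma^\star L^m\otimes F$ is big, and since its sections inject into $H^0(\wih X,\sigma^\star L^{mp})$ (sections vanishing to order $p$ along $E$), $L$ is big. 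Your alternative route via $L^2$ peak sections on $X$ would work but is only gestured at, so as written this implication is not proved.
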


\begin{proof} 
Recall that $\codim X_\sing\geq2$ since $X$ is normal. 
Hironaka's theorem (see e.g.\ \cite[Theorem 13.2]{BM97}) shows that there 
exists a compact complex manifold $\wih X$ and a surjective holomorphic 
map $\sigma:\wih X\to X$, given as a composition of finitely many blow-ups 
with smooth center, such that $\sigma:\wih X\setminus E\to X_\reg$ is a 
biholomorphism, where $E=\sigma^{-1}(X_\sing)$ is a divisor in $\wih X$. 

Since $X$ is normal and $\sigma:\wih X\setminus E\to X_\reg$ is a 
biholomorphism, we infer by Riemann's second extension theorem 
\cite[p.\,143]{GR84} that the map 
$\sigma^\star:H^0(X,L^p)\longrightarrow H^0(\wih X,\sigma^\star L^p)$ 
is an isomorphism. Thus $\Vol(\sigma^\star L)=\Vol(L)$. Moreover, 
it follows from \eqref{e:KIinv} that $L$ is big if and only if $\sigma^\star L$ 
is big. Thus $(a),(b)$ are equivalent since $\sigma^\star L$ is big if and 
only if $\Vol(\sigma^\star L)>0$.

We now show that $(a)$ implies $(c)$. Let $\wih\omega$ be a Hermitian form 
on $\wih X$ such that $\wih\omega\geq\sigma^\star\omega$. Since $\sigma^\star L$ 
is big, it follow by \cite{JS93} that there exists a singular Hermitian metric $\wih g$ on 
$\sigma^\star L$ such that $c_1(\sigma^\star L,\wih g)\geq\delta\wih\omega$ for 
some $\delta>0$. Consider the singular metric $g=(\sigma^{-1})^\star\wih g$ 
on $L|_{X_\reg}$. It verifies $c_1(L|_{X_\reg},g)\geq\delta\omega$. 
Let $U\subset X$ be an open set such that $L$ has a holomorphic frame $e_U$ 
on $U$ and there exists a local embedding $\iota: U\to G \subset \C^N$. 
If $z=(z_1,\ldots,z_N)\in G$, by shrinking $U$ we may assume that 
\[C_1\iota^\star dd^c\|z\|^2\leq\omega\leq C_2\iota^\star dd^c\|z\|^2\]
holds on $U$ for some constants $C_2>C_1>0$. Let $\varphi_U$ be the 
local weight of $g$ on $U\cap X_\reg$, so $|e_U|_g=e^{-\varphi_U}$ and $dd^c\varphi_U\geq\delta\omega$ on $U\cap X_\reg$. It follows that 
$\varphi_U-C_1\delta\iota^\star\|z\|^2\in\PSH(U\cap X_\reg)$. Since 
$X$ is normal we conclude by Riemann's second 
extension theorem for psh functions \cite[Satz\,4]{GR56} that 
$\varphi_U-C_1\delta\iota^\star\|z\|^2$ extends to a psh function on $U$. So  
\[dd^c\varphi_U\geq C_1\delta\iota^\star dd^c\|z\|^2\geq 
C_1C_2^{-1}\delta\omega\]
holds on $U$. Therefore $g$ extends to a singular Hermitian metric of $L$ 
on $X$ and $c_1(L,g)\geq\varepsilon\omega$ for some $\varepsilon>0$.

Finally we show that $(c)$ implies $(a)$. There exists a smooth Hermitian 
metric $\theta$ on $F=\cO_{\wih X}(-E)$ and $C>0$ such that 
$\Omega=C\sigma^\star\omega+c_1(F,\theta)$ is a Hermitian form on 
$\wih X$ and $\Omega\geq\sigma^\star\omega$ (see e.g.\ 
\cite[Lemma 1]{Moi67}, \cite[Lemma 2.2]{CMM17}). Fix $m\in\N$ such that 
$m\varepsilon\geq C$. Then 
\[c_1(\sigma^\star L^m\otimes F,\sigma^\star g^m\otimes\theta)=
m\sigma^\star c_1(L,g)+c_1(F,\theta)\geq
m\varepsilon\sigma^\star\omega+\theta\geq\Omega,\]
hence by \cite{JS93}, the line bundle $\sigma^\star L^m\otimes F$ is big.
Note that $H^0(\wih X,(\sigma^\star L^m\otimes F)^p)$ is isomorphic to the 
subspace of sections in $H^0(\wih X,\sigma^\star L^{mp})$ that vanish to order 
at least $p$ along $E$. We infer that $\sigma^\star L^m$, and hence 
$\sigma^\star L$, are big line bundles. Thus $L$ is big, by \eqref{e:KIinv}.
\end{proof}
\begin{Proposition}\label{P:big2}
Let $X$ be as in (A1) and $L$ be a big holomorphic line bundle on $X$. 
Then $X$ is a Moishezon space and 
\[\Vol(L)=\lim_{p\to\infty}\frac{\dim H^0(X,L^p)}{p^n/n!}.\]
Moreover, if $\pi:\wi X\to X$ is the map from Theorem \ref{T:Moi} then 
$\pi^\star:H^0(X,L^p)\longrightarrow H^0(\wi X,\pi^\star L^p)$
is an isomorphism. 
\end{Proposition}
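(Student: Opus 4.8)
The plan is to establish the three assertions in turn, leveraging Theorem \ref{T:Moi} and the two key facts that a normal space has $\codim X_\sing \geq 2$ and that bigness is preserved under the relevant bimeromorphic operations. First I would observe that bigness of $L$ means $\kappa(X,L) = \dim X = a(X)$ by \eqref{e:KIA}, which is precisely the definition of $X$ being a Moishezon space. This gives the first claim immediately and, crucially, makes Theorem \ref{T:Moi} applicable: there is a projective manifold $\wi X$ and $\pi : \wi X \to X$, a composition of blow-ups with smooth centers, restricting to a biholomorphism over $X \setminus Y$ with $\codim Y \geq 2$ (the last clause of Theorem \ref{T:Moi}, using normality of $X$).

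Next I would prove that $\pi^\star : H^0(X, L^p) \to H^0(\wi X, \pi^\star L^p)$ is an isomorphism. Injectivity is clear since $\pi$ is surjective. For surjectivity, take $\wi S \in H^0(\wi X, \pi^\star L^p)$; then $(\pi^{-1})^\star \wi S$ is a holomorphic section of $L^p$ over $X \setminus Y$. Since $X$ is normal and $\codim Y \geq 2$, Riemann's second extension theorem (as cited in the proof of Proposition \ref{P:big1}, \cite[p.\,143]{GR84}) shows this section extends holomorphically across $Y$ to a section $S \in H^0(X, L^p)$, and $\pi^\star S = \wi S$ by continuity since the two agree on the dense open set $\wi X \setminus E$. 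This is the step I expect to require the most care, as it is exactly where normality (equivalently $\codim X_\sing \geq 2$, and then $\codim Y \geq 2$ from Theorem \ref{T:Moi}) is used; on a general complex space the extension would fail.

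Finally, the isomorphism $H^0(X,L^p) \simeq H^0(\wi X, \pi^\star L^p)$ for all $p$ gives $\dim H^0(X,L^p) = \dim H^0(\wi X, \pi^\star L^p)$, hence $\Vol(L) = \Vol(\pi^\star L)$ by \eqref{e:vol}. Moreover $\pi^\star L$ is big on $\wi X$: by \eqref{e:KIinv} applied to the bimeromorphic map $\pi$ (or directly \cite[Theorem 5.13]{U75} for surjective holomorphic maps), $\kappa(\wi X, \pi^\star L) = \kappa(X, L) = \dim X = \dim \wi X$. Since $\wi X$ is a projective manifold and $\pi^\star L$ is big, the volume is computed by an honest limit rather than a limsup --- this is the standard fact that for big line bundles on projective manifolds the sequence $\dim H^0(\wi X, \pi^\star L^p)/(p^n/n!)$ converges (see \cite[Theorem 2.2.7]{MM07}, \cite[Section 2.2]{La1}). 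Transporting this back through the dimension equality yields $\Vol(L) = \lim_{p\to\infty} \dim H^0(X,L^p)/(p^n/n!)$, completing the proof.
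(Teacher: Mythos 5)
Your proposal is correct and follows essentially the same route as the paper: Moishezonness from $\kappa(X,L)=\dim X$ via \eqref{e:KIA}, the isomorphism $\pi^\star$ on sections via normality, $\codim Y\geq 2$ and Riemann's second extension theorem, and the volume limit obtained by transporting the problem to the projective manifold $\wi X$ where $\pi^\star L$ is big by \eqref{e:KIinv}. The only cosmetic difference is the reference for the fact that the $\limsup$ defining the volume is a genuine limit on $\wi X$ (the paper invokes $\pi^\star L=\cO_{\wi X}(D)$ and \cite[Example 11.4.7]{La2}), which does not affect the argument.
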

\begin{proof}
It follows from \eqref{e:KIA} that $\kappa(X,L)=a(X)=\dim X$, so $X$ is a 
Moishezon space. By \eqref{e:KIinv} $\kappa(\wi X,\pi^\star L)=\kappa(X,L)$, 
hence $\pi^\star L$ is big. Moreover, $\pi^\star L=\cO_{\wi X}(D)$ 
for some integral divisor $D$ in $\wi X$, since $\wi X$ is projective. By 
\cite[Example 11.4.7]{La2} (see also \cite{Bo02}),
\[\Vol(\pi^\star L)=
\lim_{p\to\infty}\frac{\dim H^0\big(\wi X,\pi^\star L^p\big)}{p^n/n!}.\]
Since $X$ is normal we have that $\codim X_\sing\geq 2$, so 
$\codim Y\geq 2$. As $\pi:\wi X\setminus E\to X\setminus Y$ is a 
biholomorphism, where $E,Y$ are as in Theorem \ref{T:Moi}, 
Riemann's second extension theorem 
\cite[p.\,143]{GR84} implies that the map 
$\pi^\star:H^0(X,L^p)\longrightarrow H^0(\wi X,\pi^\star L^p)$
is an isomorphism. The conclusion follows.
\end{proof}
Note that in order to conclude that $X$ is Moishezon
in Proposition \ref{P:big2} we do not need $X$ to be normal
(required in (A1)), see \cite[Proposition 2.3]{CMM17}.
 
We end this section with a brief discussion of big cohomology classes 
on a compact complex manifold $X$. 
We consider the $\ddbar$-cohomology and particularly the 
space $H^{1,1}_\ddbar(X,\R)$, which is finite dimensional. If 
$\alpha$ is a smooth real closed $(1,1)$-form on $X$ we denote its 
$\ddbar$-cohomology class by $\{\alpha\}_\ddbar$. Recall that if $X$ is K\"ahler 
then by the $\ddbar$-lemma $H^{1,1}_\ddbar(X,\R)=H^{1,1}(X,\R)$, and we write 
$\{\alpha\}_\ddbar=\{\alpha\}$.

A class $\{\alpha\}_\ddbar$ is called \textit{big}
if it contains a \textit{K\"ahler current}, 
i.e., a positive closed current $T$ of bidegree $(1,1)$ on $X$ such that 
$T\geq \varepsilon\omega$ for some number $\varepsilon>0$, where 
$\omega$ is a Hermitian form on $X$. In this case, by Demailly's regularization 
theorem \cite[Proposition 3.7]{D92} (see also \cite[Theorem 3.2]{DP04}), 
one can find a K\"ahler current $T\in\{\alpha\}_\ddbar$ of the form 
\begin{equation}\label{e:Dreg}
T=\alpha+dd^c\varphi, \text{ where } \varphi\in\PSH(X,\alpha),\; 
\varphi=c\log\Big(\sum_{j=1}^\infty |g_j|^2\Big)+O(1) \text{ locally on } X,
\end{equation}
$c>0$ is a rational number and $g_j$ are holomorphic functions.
Note that the series $\sum_{j=1}^\infty |g_j|^2$ converges locally uniformly 
and its sum is a real analytic function.

The \textit{non-K\"ahler locus} of $\{\alpha\}_\ddbar$ is defined in 
\cite[Definition\ 3.16]{Bo04} as the set 
\[E_{nK}\big(\{\alpha\}_\ddbar\big)=
\bigcap\big\{E_+(T):\,\text{$T\in\{\alpha\}_\ddbar$ K\"ahler current}\big\},\] 
where $E_+(T)=\{x\in X:\,\nu(T,x)>0\}$ and $\nu(T,x)$ is the Lelong number 
of $T$ at $x$. By \cite[Theorem\ 3.17]{Bo04}, there exists a 
K\"ahler current $T=\alpha+dd^c\varphi$ as in \eqref{e:Dreg} such that 
\begin{equation}\label{e:EnK}
E_{nK}\big(\{\alpha\}_\ddbar\big)=E_+(T)=\{\varphi=-\infty\}.
\end{equation}
In particular, $E_{nK}\big(\{\alpha\}_\ddbar\big)$ is an analytic subset of $X$.

\section{Convergence of the Fubini-Study currents}\label{S:FS}

We start by recalling the definition and some basic facts about Bergman kernels and Fubini-Study currents. Then we give the proof of Theorem \ref{T:FS}. 

\subsection{Bergman kernels and Fubini-Study currents}

Let $X,\omega$ be as in (A1) and $(L,h)$ be a singular Hermitian holomorphic line bundle on $X$. Since $X$ is 
compact, the space $H^0(X,L)$ is finite dimensional. Let 
 \[H^0_{ (2)}(X, L) = H^0_{ (2)}(X,L,h, \omega^n)\] 
 be the Bergman space of $L^2$-holomorphic sections of $L$ 
 relative to the metric $h$ and the volume form $\omega^n /n!$ 
 on $X$, endowed with the inner product
\begin{equation}\label{e:iprod}
(S,S'):=\int_X\langle S,S'\rangle_h\,\frac{\omega^n}{n!}\,.
\end{equation}
Set $\|S\|^2=\|S\|^2_{h,\omega^n}:=(S,S)$. 

Let $r=\dim H^0_{ (2)}(X, L)$ and $S_1,\ldots,S_r$ be
an orthonormal basis of $H^0_{ (2)}(X, L)$. 
The \textit{Bergman kernel function} $P$ of $H^0_{ (2)}(X, L)$ 
is defined by 
\begin{equation}\label{e:Bkf}
P(x)=\sum_{j=1}^r|S_j(x)|_h^2,\;\;|S_j(x)|_h^2
:=\langle S_j(x),S(x)\rangle_h,\;x\in X.
\end{equation}
The definition of $P(x)$ does not depend on the choice
of the orthonormal basis. 
If $U\subset X$ is an open set where $L$ has a local holomorphic frame $e_U$, then 
$|e_U|_h=e^{-\varphi_U}$, $\varphi_U\in L^1_{loc}(U,\omega^n)$, and $S_j=s_je_U$, 
$s_j\in\cO_X(U)$. It follows that 
\begin{equation}\label{e:Bk_local}
\log P\mid_U= \log\Big(\sum_{j=1}^r|s_j|^2 \Big)-2\varphi_U,
\end{equation}
so $\log P\in L^1(X,\omega^n)$. When $h$ is bounded we have that $P$ satisfies 
the following variational principle:
\begin{equation}\label{e:Bkvar}
P(x)= \max \left\{|S(x)|^2_h:\,S\in H^0_{(2)}(X, L),\,\|S\| = 1\right\},\;x\in X.
\end{equation}

The \textit{Fubini-Study current} $\gamma$ of $H^0_{ (2)}(X, L)$ is defined by
\begin{equation}\label{e:FS}
\gamma\mid_U = \frac{1}{2}\,dd^c \log\Big(\sum_{j=1}^r |s_j|^2\Big),
\end{equation}
where  $U$ and $s_j$ are as above. Then $\gamma$ is a positive closed current 
of bidegree $(1,1)$ on $X$ and, by \eqref{e:Bk_local},
\begin{equation}\label{e:Bk_FS}
\gamma=c_1 (L, h)+ \frac{1}{2}\,dd^c \log P.
\end{equation}
Note that $\gamma=\Phi^\star\omega_\FS$, where $\Phi$ is the Kodaira map 
defined in \eqref{e:Kodaira} for the space $H^0_{ (2)}(X, L)$, and $\omega_\FS$ 
denotes the Fubini-Study form on a projective space $\P^{r-1}$.

We conclude this section by recalling the Lelong-Poincar\'e formula: If $[s=0]$ is 
the current of integration along the zero divisor of $s\in H^0(X,L)$ then 
$\log|s|_h\in L^1(X,\omega^n)$ and 
\begin{equation}\label{e:LP}
[s=0]=c_1 (L, h)+dd^c\log|s|_h.
\end{equation}

\subsection{Proof of Theorem \ref{T:FS}}\label{SS:FS}

Since $X$ is a normal space, it is locally irreducible and we have that 
$\codim X_\sing\geq2$. Moreover, by Proposition \ref{P:big2} we have 
that $X$ is a Moishezon space and $\dim H^0(X,L^p)\sim p^n$.

Let $h_0,\varphi$ be as in \eqref{e:varphi}, and $P_p,\gamma_p$ 
be the Bergman kernel function and Fubini-Study current of the 
space $H^0_{(2)}(X,L^p)=H^0(X,L^p)$. Using \eqref{e:Bk_FS} and 
\eqref{e:cu1} we obtain   
\begin{equation}\label{e:FSp1}
\frac{1}{p}\,\gamma_p=c_1(L,h)+\frac{1}{2p}\,dd^c\log P_p=
\alpha+dd^c\varphi_p\,, 
\end{equation}
where
\begin{equation}\label{e:FSp2}
\varphi_p=\varphi+\frac{1}{2p}\,\log P_p\in\PSH(X,\alpha).
\end{equation}

We call the function $\varphi_p$ the \textit{global Fubini-Study potential} 
of $\gamma_p$ (with respect to the fixed Hermitian metric $h_0$). 
Since $\varphi$ is continuous we have $\varphi_p-M\in\PSH(X,\alpha,\varphi)$, 
for some $p$ such that $H^0(X,L^p)\neq\emptyset$ and some constant $M$, 
where $\PSH(X,\alpha,\varphi)$ is defined in \eqref{e:class}. Thus the function 
$\varphi_\eq=V_\varphi$ from \eqref{e:eqw} is well defined and 
$\varphi_\eq\in\PSH(X,\alpha,\varphi)$ by Proposition \ref{P:env}. 

Theorem \ref{T:Moi} shows that there exists a projective manifold $\wi X$ 
and a surjective holomorphic map $\pi:\wi X\to X$ such that 
$\pi:\wi X\setminus E\to X\setminus Y$ is a biholomorphism, where 
$Y\supset X_\sing$ is an analytic subset of $X$, $\dim Y\leq n-2$, and 
$E=\pi^{-1}(X_\sing)$ is a normal crossings divisor. 
Since $\wi X$ is projective we can find a K\"ahler form $\wi\omega$ 
on $\wi X$ such that 
\begin{equation}\label{e:wiom}
\wi\omega\geq\pi^\star\omega,
\end{equation} 
and we denote by $\dist$ the distance on $\wi X$ induced by $\wi\omega$. Set 
\begin{equation}\label{e:wivarphi}
\wi L:=\pi^\star L\,,\,\;\wi h_0:=\pi^\star h_0\,,\,\;\wi\alpha:=
\pi^\star\alpha=c_1(\wi L,\wi h_0)\,,\,\;\wi\varphi:=
\varphi\circ\pi\,,\,\;\wi h:=\pi^\star h=\wi h_0e^{-2\wi\varphi}.
\end{equation}
As before, we use the notation $\wi h^p=\wi h^{\otimes p}$ and $\wi h_0^p=
\wi h_0^{\otimes p}$.

It follows by assumption (A2) and \eqref{e:KIinv} that the line bundle $\pi^\star L$ 
is big. Hence the class $\{\wi\alpha\}$ is big \cite{JS93}. By \eqref{e:EnK}, there 
exist $\eta\in\PSH(\wi X,\wi\alpha)$ as in \eqref{e:Dreg} and 
$\varepsilon_0>0$ such that
\begin{equation}\label{e:eta}
Z:=E_{nK}\big(\{\wi\alpha\}\big)=\{\eta=-\infty\},\;\eta\leq-1,\;
\wi\alpha+dd^c\eta\geq\varepsilon_0\wi\omega\geq
\varepsilon_0\pi^\star\omega\,
\text{ on } \wi X.
\end{equation}
 
Let $\wi\varphi_\eq=V_{\wi\varphi}$ be the $\wi\alpha$-psh envelope of 
$\wi\varphi$ on $\wi X$ defined in \eqref{e:env}. Proposition \ref{P:env}
shows that $\wi\varphi_\eq\in PSH(\wi X,\wi\alpha,\wi\varphi)$. Since 
$\eta-M\leq\wi\varphi$ for some constant $M>0$, we infer that 
$\wi\varphi_\eq$ is locally bounded on $\wi X\setminus Z$. 
By Proposition \ref{P:blow}, 
\[\pi^\star\varphi_\eq=\wi\varphi_\eq\,,\,\;\pi_\star\wi\varphi_\eq=\varphi_\eq,\]
where $\pi^\star,\pi_\star$ are defined in \eqref{e:pull}, \eqref{e:push}. Moreover, 
\begin{equation}\label{e:tpsih}
\wi\Psi_{\wi h}:=\wi\varphi_\eq-\wi\varphi=\Psi_h\circ\pi\leq0 \text{ on } \wi X.
\end{equation}
If $S\in H^0_{(2)}(X,L^p)$ it follows since $\pi:
\wi X\setminus E\to X\setminus Y$ is a biholomorphism that 
\[\int_X|S|^2_{h^p}\,\omega^n=
\int_{\wi X}|\pi^\star S|^2_{\wi h^p}\,\pi^\star\omega^n.\]
Using Proposition \ref{P:big2} we infer that the map 
\begin{equation}\label{e:iso}
S\in H^0_{(2)}(X,L^p)\to\pi^\star S\in H^0_{(2)}(\wi X,\wi L^p):=
H^0_{(2)}(\wi X,\wi L^p,\wi h^p,\pi^\star\omega^n)
\end{equation}
is an isometry. It follows that 
\begin{equation}\label{e:wiFS}
\wi P_p=P_p\circ\pi\,,\,\;\wi\gamma_p=\pi^\star\gamma_p
\end{equation}
are the Bergman kernel function, resp.\ Fubini-Study current, 
of the space $H^0_{(2)}(\wi X,\wi L^p)$. Note that  
\begin{equation}\label{e:wiFSp}
\frac{1}{p}\,\wi\gamma_p=\wi\alpha+dd^c\wi\varphi_p\,,\,
\text{ where }\,\wi\varphi_p=\wi\varphi+\frac{1}{2p}\,\log\wi P_p=
\varphi_p\circ\pi\in\PSH(\wi X,\wi\alpha).
\end{equation}
Theorem \ref{T:FS} will follow from our next result, which deals with the 
convergence of the Bergman kernels of $H^0_{(2)}(\wi X,\wi L^p)$.

\begin{Theorem}\label{T:wiFS}
Let $X,\omega,L,h$ verify assumptions (A1)-(A2), $\pi:\wi X\to X$ be as in 
Theorem \ref{T:Moi}, and $\varphi,\wi\Psi_{\wi h}$ be defined in \eqref{e:varphi}, 
\eqref{e:tpsih}. Then the following hold:

(i) $\displaystyle\frac{1}{2p}\,\log\wi P_p\to\wi\Psi_{\wi h}$ in 
$L^1(\wi X,\wi\omega^n)$ and locally uniformly on $\wi X\setminus Z$ 
as $p\to\infty$, where $\wi\omega,Z$ are defined in \eqref{e:wiom},\eqref{e:eta}.

(ii) If $\varphi$ is H\"older continuous on $X$ 
then there exist a constant $C>0$ and $p_0\in\N$ such that 
for all $x\in\wi X\setminus Z$ and $p\geq p_0$ we have 
\[\Big|\frac{1}{2p}\,\log\wi P_p-\wi\Psi_{\wi h}\Big|\leq
\frac{C}{p}\,\big(\log p+\big|\log\dist(x,Z)\big|\big).\]
\end{Theorem}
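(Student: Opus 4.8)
The plan is to prove Theorem \ref{T:wiFS} by comparing the Bergman kernel $\wi P_p$ of the big line bundle $\wi L^p$ on the projective manifold $\wi X$ with the equilibrium potential, reducing everything to known asymptotics in the projective/smooth setting. Concretely, set $\wi\varphi_p=\wi\varphi+\frac{1}{2p}\log\wi P_p\in\PSH(\wi X,\wi\alpha)$ as in \eqref{e:wiFSp}, so that the claim \emph{(i)} is that $\frac1p(\wi\varphi_p-\wi\varphi)\to\wi\Psi_{\wi h}=\wi\varphi_\eq-\wi\varphi$; equivalently $\frac1p\log\wi P_p\to 2(\wi\varphi_\eq-\wi\varphi)$. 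The two inequalities go as follows.

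\textbf{Upper bound.} First I would show $\frac{1}{2p}\log\wi P_p\le\wi\Psi_{\wi h}+o(1)$ locally uniformly off $Z$. Since each normalized section $S/\|S\|$ has $\log|S|_{\wi h^p}^2\le\log\wi P_p$, and by the extremal characterization \eqref{e:Bkvar} (valid since $\wi h$ is continuous hence bounded, and $\wi\varphi$ is continuous) we have $\frac{1}{p}\wi\varphi_p=\sup\{\frac1p\log|s|_{\wi h^p}^{1/1}\dots\}$ — more precisely $\wi\varphi_p\le\wi\varphi$ is \emph{not} automatic, so instead I would argue: $\wi\varphi_p\in\PSH(\wi X,\wi\alpha)$ and, by a standard sub-mean-value estimate on coordinate balls using the $L^2$ bound $\|S_j^p\|=1$ together with the continuity of $\wi\varphi$, one gets $\wi\varphi_p\le\wi\varphi+\frac{C\log p}{p}$ pointwise (this is the easy half of the local reproducing estimate). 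Then $\wi\varphi_p-\frac{C\log p}{p}\in\PSH(\wi X,\wi\alpha,\wi\varphi)$, so by definition of the envelope $\wi\varphi_p-\frac{C\log p}{p}\le\wi\varphi_\eq$, i.e. $\frac{1}{2p}\log\wi P_p\le\wi\Psi_{\wi h}+\frac{C\log p}{p}$ everywhere on $\wi X$. This already gives one side with the rate needed for \eqref{e:sbk}.

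\textbf{Lower bound.} This is the main obstacle. I need, for each $x_0\in\wi X\setminus Z$, a holomorphic section $S\in H^0_{(2)}(\wi X,\wi L^p)$ with $\frac{1}{2p}\log\frac{|S(x_0)|^2_{\wi h^p}}{\|S\|^2}\ge\wi\Psi_{\wi h}(x_0)-o(1)$, with the $o(1)$ controlled by $\frac Cp(\log p+|\log\dist(x_0,Z)|)$ when $\wi\varphi$ is Hölder. The construction is the Hörmander $\overline\partial$-method: fix $x_0\notin Z$, use the strictly positive curvature $\wi\alpha+dd^c\eta\ge\varepsilon_0\wi\omega$ from \eqref{e:eta} to build a singular metric on $\wi L^p$ of the form (roughly) $p\,\wi\varphi_\eq$ plus a small multiple of $\eta$ plus a logarithmic bump $n\log\dist(\cdot,x_0)$ cut off near $x_0$; solve $\overline\partial u=\overline\partial(\text{local peak section})$ with $L^2$ control to correct it to a global holomorphic section $S$ that is, roughly, a peak section at $x_0$. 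The denominator $\|S\|^2$ is estimated using that $\wi\varphi_\eq\le\wi\varphi$ (so the weight $e^{-2p\wi\varphi_\eq}$ dominates $e^{-2p\wi\varphi}$, keeping $\|S\|_{\wi h^p}$ finite and comparable), while $|S(x_0)|$ is bounded below because near $x_0$, $\wi\varphi_\eq$ is bounded (as $x_0\notin Z$) and one can arrange the peak. The error terms come from: the radius of the cutoff ball ($\sim1/p$, contributing $\log p$), the size of the auxiliary term $\frac{1}{p}\eta$ which forces $x_0$ away from $Z$ (contributing $|\log\dist(x_0,Z)|$ in the Hölder case via Hölder modulus of continuity estimates on $\wi\varphi_\eq$ near $Z$), and the Hölder modulus of $\wi\varphi$ itself (contributing $\frac{\log p}{p}$). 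I would model this estimate closely on the proof of \cite[Theorem 1.8]{CMN22} and on \cite[Theorem 1.3]{DMM16} for the Hölder rate, adapting the weight to the equilibrium potential $\wi\varphi_\eq$ instead of a weight with prescribed poles along a divisor.

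\textbf{Assembling the conclusions.} Once the two-sided pointwise estimate holds locally uniformly on $\wi X\setminus Z$, local uniform convergence in \emph{(i)} is immediate, and the $L^1(\wi X,\wi\omega^n)$ convergence follows because $Z$ has measure zero and $\{\frac{1}{2p}\log\wi P_p\}$ is dominated in $L^1$ (the functions $\wi\varphi_p$ are $\wi\alpha$-psh with uniformly bounded sup, hence precompact in $L^1$, and any $L^1$-limit agrees a.e. with $\wi\Psi_{\wi h}$ by the pointwise statement on $\wi X\setminus Z$). For \emph{(ii)}, the Hölder hypothesis feeds into both the upper bound (through the sub-mean-value estimate, giving $\frac{C\log p}{p}$) and the lower bound (through the $\overline\partial$-construction, giving $\frac Cp(\log p+|\log\dist(x,Z)|)$), and combining them yields the stated inequality. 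Finally Theorem \ref{T:FS} is recovered by pushing forward via $\pi$: $\frac{1}{2p}\log P_p=\frac{1}{2p}\log\wi P_p\circ\pi^{-1}$ on $X\setminus Y$, the $L^1(X,\omega^n)$ convergence follows from the $L^1(\wi X,\wi\omega^n)$ statement and $\wi\omega\ge\pi^\star\omega$, the weak convergence $\frac1p\gamma_p\to c_1(L,h_\eq)$ follows by applying $dd^c$ to \eqref{e:FSp1}–\eqref{e:FSp2}, and the rate \eqref{e:sbk} follows by integrating the pointwise Hölder estimate of \emph{(ii)} over $\wi X$, using that $|\log\dist(x,Z)|$ is integrable against $\wi\omega^n$ since $Z$ is an analytic subset.
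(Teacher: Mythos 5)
Your overall strategy coincides with the paper's: an upper bound obtained from a sub-mean-value inequality combined with the definition of the envelope $\wi\varphi_\eq$, and a lower bound obtained by constructing peak sections via Demailly's $L^2$-estimates for $\db$ with a weight twisted by the potential $\eta$ of a K\"ahler current in $\{\wi\alpha\}$, which is exactly why the error localizes on $Z=E_{nK}(\{\wi\alpha\})=\{\eta=-\infty\}$. The assembly of (i) and (ii) is also essentially the paper's (the paper integrates the explicit two-sided bound rather than invoking $L^1$-precompactness of $\wi\alpha$-psh functions, but both routes work).

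There is, however, one genuine gap in your upper bound. You claim that a ``standard sub-mean-value estimate on coordinate balls'' gives $\wi\varphi_p\leq\wi\varphi+\frac{C\log p}{p}$ pointwise on all of $\wi X$. This does not follow directly, because the inner product on $H^0_{(2)}(\wi X,\wi L^p)$ is taken with respect to the \emph{degenerate} volume form $\pi^\star\omega^n$ (this is forced on you if $\pi^\star$ is to be an isometry), and $\pi^\star\omega^n$ vanishes to some order $K$ along the exceptional divisor $E=\pi^{-1}(Y)$, i.e.\ $\pi^\star\omega^n\gtrsim\dist(\cdot,E)^K\,\wi\omega^n$ is the best one can say. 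Consequently the sub-mean-value argument only yields $\frac{1}{2p}\log\wi P_p(x)\leq \Omega_{\wi\varphi}(\delta)+\delta+\frac{C}{p}(1-\log\delta)$ for points $x$ with $\dist(x,E)\geq\delta$, and degenerates as $x$ approaches $E$. The paper closes this gap with a second step (following \cite[Proposition 5.4]{CMN22}): one uses that $\wi\varphi_p$ is $\wi\alpha$-psh and applies the maximum principle on coordinate polydiscs centered at points of $E$, exploiting that $E$ is a normal crossings divisor, to propagate the bound from $\{\dist(\cdot,E)\geq\delta\}$ to all of $\wi X$. Only then can one compare with $\wi\varphi_\eq$ via the envelope definition. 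Your write-up never mentions $E$, so this step is missing. A second, more minor, point: you attribute the $|\log\dist(x,Z)|$ term in (ii) to H\"older estimates on $\wi\varphi_\eq$ near $Z$; in fact it arises already for continuous $\varphi$, from the term $\frac{p_0}{p}\eta(x)$ in the lower bound together with the {\L}ojasiewicz inequality $\eta\geq -N|\log\dist(\cdot,Z)|-M$ applied to the analytic-singularity potential $\eta$; the H\"older hypothesis enters only through the choice $\delta=p^{-1/\nu}$ in the upper bound, which produces the $\frac{\log p}{p}$ term.
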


We follow the proof of \cite[Theorem 5.1]{CMN22} and adapt the arguments to 
work with the full Bergman kernel $\wi P_p$ instead of the partial Bergman 
kernel considered there (see also \cite{Ber09}, \cite{CM15}, \cite{RWN17} 
for similar approaches). We will need the following version of Demailly's 
$L^2$-estimates for the $\db$ equation \cite[Th\'eor\`eme 5.1]{D82} 
(see also \cite[Theorem 5.5]{CMN22}).

\begin{Theorem}\label{T:dbar}
Let $M$ be a complete K\"ahler manifold of dimension $n$, 
$\Omega$ be a (not necessarily complete) K\"ahler form on $M$, 
$\chi$ be a qpsh function on $M$, and $(F,h)$ be a singular 
Hermitian holomorphic line bundle on $M$. Assume that there exist 
constants $A,B>0$ such that 
\[\ric\Omega\geq-2\pi B\Omega\,,\,\;dd^c\chi\geq
-A\Omega\,,\,\;c_1(F,h)\geq(1+B+A/2)\Omega\,.\]
If $g\in L_{0,1}^2(M,F,{\rm loc})$ satisfies $\db g=0$ 
and $\int_M|g|^2_he^{-\chi}\,\Omega^n<+\infty$, 
then there exists $u\in L_{0,0}^2(M,F,{\rm loc})$ with 
$\db u=g$ and $\int_M|u|^2_he^{-\chi}\,\Omega^n\leq
\int_M|g|^2_he^{-\chi}\,\Omega^n$.
\end{Theorem}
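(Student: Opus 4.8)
The plan is to recognize this as Hörmander--type $L^2$ existence for $\db$ and to deduce it from Demailly's theorem \cite[Th\'eor\`eme 5.1]{D82} by absorbing the weight $\chi$ into the metric of $F$. First I would set $h':=he^{-\chi}$, so that the two integrals in the statement become $\int_M|g|^2_{h'}\,\Omega^n$ and $\int_M|u|^2_{h'}\,\Omega^n$, and the problem becomes the plain estimate: solve $\db u=g$ with $\|u\|_{h'}\le\|g\|_{h'}$. Working on the complete K\"ahler manifold $M$, I would introduce the Hilbert spaces $L^2_{0,q}(M,F)$ ($q=0,1,2$) for the metric $h'$ and the volume $\Omega^n/n!$, view $\db$ as densely defined closed operators $T\colon L^2_{0,0}\to L^2_{0,1}$ and $S\colon L^2_{0,1}\to L^2_{0,2}$ with $S\circ T=0$, and reduce the existence of $u$ to H\"ormander's functional--analytic criterion: if there is a constant $C$ with $|\langle g,v\rangle|^2\le C\,\|T^*v\|^2$ for all $v\in\Dom(T^*)\cap\Dom(S)$, then there is $u$ with $Tu=g$ and $\|u\|^2\le C$.

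The heart of the matter is the Bochner--Kodaira--Nakano a priori inequality
\[\|\db v\|^2+\|\db^* v\|^2\ \ge\ \int_M\big\langle[\,i\Theta(E),\Lambda\,]v,v\big\rangle\,\frac{\Omega^n}{n!},\]
obtained after identifying $(0,1)$--forms valued in $F$ with $(n,1)$--forms valued in $E:=F\otimes K_M^{-1}$, so that the Ricci curvature of $\Omega$ enters as the curvature of the anticanonical factor via $\ric\Omega=i\Theta(K_M^{-1})$. The curvature bookkeeping runs as follows: absorbing $\chi$ changes the curvature of $F$ by $\tfrac12\,dd^c\chi$, whence $c_1(F,h')=c_1(F,h)+\tfrac12\,dd^c\chi\ge(1+B+A/2)\Omega-\tfrac{A}{2}\Omega=(1+B)\Omega$; and $\ric\Omega\ge-2\pi B\Omega$ gives $c_1(K_M^{-1})\ge-B\Omega$. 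Summing, $c_1(E,\cdot)\ge\Omega$, so that on $(n,1)$--forms the curvature operator satisfies $[\,i\Theta(E),\Lambda\,]\ge\mathrm{Id}$ in the normalization at hand (the lowest eigenvalue of a positive $(1,1)$--form $\ge\Omega$ being $\ge1$ relative to $\Omega$). Feeding this back, $\|v\|^2\le\|\db v\|^2+\|\db^*v\|^2$; for $\db$--closed $g$ one decomposes $v=v_1+v_2$ along $\ker\db$ and its orthocomplement, uses $\db^*v_1=\db^*v$ and $\langle g,v\rangle=\langle g,v_1\rangle$, and Cauchy--Schwarz to obtain $|\langle g,v\rangle|^2\le\|g\|^2\,\|\db^*v\|^2$. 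This is the criterion with $C=\|g\|^2$ and yields $u$ with $\db u=g$ and $\|u\|^2\le\|g\|^2$.

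Two technical points carry the real difficulty, and I would treat them as in Demailly's proof. First, $\Omega$ is not assumed complete, so the formal and Hilbert--space adjoints of $\db$ need not agree and compactly supported forms need not be dense; the standard remedy is to run the estimate for the complete K\"ahler forms $\Omega_\delta=\Omega+\delta\wih\omega$, where $\wih\omega$ is a fixed complete metric on $M$, and then let $\delta\downarrow0$. This passage to the limit is clean because, after the $K_M^{-1}$ twist, the pointwise norm of the relevant $(n,\bullet)$--type objects against $\Omega_\delta^n$ is essentially independent of $\delta$, so the solutions $u_\delta$ stay uniformly $L^2$--bounded and a weak limit solves $\db u=g$ with the same bound. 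Second, $\chi$ is only qpsh, so $e^{-\chi}$ may be unbounded and non--smooth; I would regularize $\chi$ by a decreasing sequence of smooth qpsh functions, run the estimate with the smoothed weight while keeping the lower bound $dd^c\chi\ge-A\Omega$ intact, and pass to the limit by monotone convergence. The main obstacle is therefore not any single inequality but the combination of (i) the careful constant and sign bookkeeping in the $dd^c$ normalization needed to land exactly on the constant--$1$ estimate, and (ii) the incompleteness of $\Omega$; once these are organized, the statement is precisely \cite[Th\'eor\`eme 5.1]{D82} and can either be invoked directly or reconstructed by the argument above.
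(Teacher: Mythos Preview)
The paper does not actually prove this theorem: it states it as a version of Demailly's $L^2$-estimates and simply cites \cite[Th\'eor\`eme 5.1]{D82} (and \cite[Theorem 5.5]{CMN22}) without further argument. Your proposal is therefore entirely consistent with the paper's treatment---you too identify the statement with Demailly's theorem and note it can be invoked directly---but you go further by correctly sketching the reduction: absorbing $e^{-\chi}$ into the metric so that $c_1(F,h')\geq(1+B)\Omega$, twisting by $K_M^{-1}$ to bring in $\ric\Omega$, and obtaining $c_1(E)\geq\Omega$ so that the Bochner--Kodaira--Nakano inequality yields $\|v\|^2\leq\|\db v\|^2+\|\db^\ast v\|^2$ and hence the constant-$1$ estimate via the standard H\"ormander functional-analytic step; the regularization of $\chi$ and the passage from $\Omega$ to complete $\Omega_\delta=\Omega+\delta\wih\omega$ are exactly the technical devices used in Demailly's proof.
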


\begin{proof}[Proof of Theorem \ref{T:wiFS}]
Let 
\[\Omega_{\wi\varphi}(\delta)=\sup\big\{|\wi\varphi(x)-\wi\varphi(y)|:\,
x,y\in\wi X,\;\dist(x,y)<\delta\big\}\]
be the modulus of continuity of $\wi\varphi$. We divide the proof in three steps.

\medskip

\textit{Step 1}. We show here that there exists a constant $C>0$ 
such that for all $p\geq1$ and $\delta\in(0,1)$ the following estimate holds on $\wi X$:
\begin{equation}\label{e:uest1}
\frac{1}{2p}\,\log\wi P_p\leq\wi\Psi_{\wi h}+C\Big(\delta+\frac{1}{p}-
\frac{\log\delta}{p}\Big)+2\Omega_{\wi\varphi}(C\delta).
\end{equation}

To this end, we first apply \cite[Proposition 5.2]{CMN22} to conclude that there exists a constant $C'>0$ such that if $p\geq1$ and $0<\delta<1$ then
\begin{equation}\label{e:uest2}
F_p(\delta):=\sup\Big\{\frac{1}{2p}\,\log\wi P_p(x):\,x\in\wi X,\;
\dist(x,E)\geq\delta\Big\}\leq\frac{C'}{p}\,(1-\log\delta)+\delta+\Omega_{\wi\varphi}(\delta).
\end{equation}
Indeed, as noted in \cite[Remark 5.3]{CMN22}, the upper estimate proved in \cite[Proposition 5.2]{CMN22} holds for the full Bergman kernel $\wi P_p(x)$ of $H^0_{(2)}(\wi X,\wi L^p)$, for points $x$ at distance at least $\delta$ from $E$. This follows from a standard subaverage inequality and the fact that there exists $K>0$ such that 
\[\pi^\star\omega^n(x)\gtrsim\dist(x,E)^K\,\wi\omega^n(x)\,,\,\;
\forall\,x\in\wi X\,.\]

Let $\wi\varphi_p$ be the global Fubini-Study potentials of $\wi\gamma_p$ defined in \eqref{e:wiFSp}. We then proceed exactly as in the proof of \cite[Proposition 5.4]{CMN22} to show that there exists a constant $C''>1$ such that if $p\geq1$ and $0<\delta<1$ then 
\begin{equation}\label{e:uest3}
\wi\varphi_p(x)\leq\wi\varphi(x)+C''\delta+\Omega_{\wi\varphi}(C''\delta)+F_p(\delta/C'')
\end{equation}
holds for all $x\in\wi X$ (see \cite[(5.9]{CMN22}), where $F_p$ is defined in \eqref{e:uest2}. Indeed, \eqref{e:uest3} is proved at points near $E$ by using the fact that $\wi\varphi_p$ is $\wi\alpha$-psh, and by applying the maximum principle on coordinate polydiscs centered at points of $E$ and using the fact that $E$ is a normal crossings divisor.  

We now infer from the definition of $\wi\varphi_\eq$ and \eqref{e:uest3} that
\[\wi\varphi_p\leq\wi\varphi_\eq+C''\delta+\Omega_{\wi\varphi}(C''\delta)+F_p(\delta/C'').\]
Using \eqref{e:uest2} and \eqref{e:tpsih} this implies that 
\[\frac{1}{2p}\,\log\wi P_p\leq\wi\Psi_{\wi h}+
(C''+1)\delta+2\Omega_{\wi\varphi}(C''\delta)+
\frac{C'}{p}\,\Big(1-\log\frac{\delta}{C''}\Big),\]
which yields \eqref{e:uest1}.

\medskip

\textit{Step 2}. We prove here that there exist a constant $C>0$ and $p_0\in\N$ such 
that for all $p\geq p_0$ the following estimate holds on $\wi X\setminus Z$:
\begin{equation}\label{e:lest1}
\frac{1}{2p}\,\log\wi P_p\geq\wi\Psi_{\wi h}+\frac{C}{p}\,\eta.
\end{equation}

Let $H^0_{(2)}(\wi X,\wi L^p,H_p,\wi\omega^n)$ be the Bergman space of 
$L^2$-integrable sections of $\wi L^p$ with respect to the volume form 
$\wi\omega^n$ on $\wi X$ and the metric $H_p$ on $\wi L^p$, where 
\[H_p=\wi h_0^pe^{-2\psi_p},\;\psi_p=(p-p_0)\wi\varphi_\eq+p_0\eta,\]
and $p_0\in\N$ will be specified later. Recall that 
$\wi\varphi_\eq\in\PSH(\wi X,\wi\alpha,\wi\varphi)$ and
$\wi\varphi_\eq\geq\eta-M$ for some constant $M$. Using \eqref{e:eta} 
we see that $\psi_p\leq(p-p_0)\wi\varphi$ if $p\geq p_0$. 
Moreover, 
\[c_1(\wi L^p,H_p)=(p-p_0)(\wi\alpha+dd^c\wi\varphi_\eq)+
p_0(\wi\alpha+dd^c\eta)\geq p_0\varepsilon_0\wi\omega.\]
 
We fix $p_0$ such that $p_0\varepsilon_0\geq1+B+A/2$, 
where $A,B$ are as in Theorem \ref{T:dbar} and let $p\geq p_0$. Applying  
Theorem \ref{T:dbar} for $\wi X,\wi\omega,\wi L^p$, with suitable weights $\chi$ as in the proof of 
\cite[Theorem 5.1]{CM15}, we can show that there exists a constant $C_1>0$ 
such that for all $p\geq p_0$ and $x\in\wi X\setminus Z$ there exists 
$S_x\in H^0_{(2)}(\wi X,\wi L^p,H_p,\wi\omega^n)$ with $S_x(x)\neq0$ and 
\[\|S_x\|^2_{H_p,\wi\omega^n}\leq C_1|S_x(x)|^2_{H_p}\,.\]
Note that $H_p=\wi h^pe^{2F_p}$, where 
$F_p=p\wi\varphi-\psi_p\geq p_0\wi\varphi$. 
Then $F_p\geq ap_0$, where $a:=\min_{\wi X}\wi\varphi$, and by \eqref{e:wiom} we get
\[C_1|S_x(x)|^2_{\wi h^p}e^{2F_p(x)}\geq
\|S_x\|^2_{H_p,\wi\omega^n}=
\int_{\wi X}|S_x|^2_{\wi h^p}e^{2F_p}\,\wi\omega^n\geq 
e^{2ap_0}\|S_x\|^2_{\wi h^p,\pi^\star\omega^n}.\]
Using \eqref{e:Bkvar}, this implies that 
\[\wi P_p(x)\geq C_1^{-1}e^{2ap_0-2F_p(x)}.\]
Note that  
\[F_p=p\wi\varphi-\psi_p=p(\wi\varphi-\wi\varphi_\eq)+
p_0\wi\varphi_\eq-p_0\eta\leq 
-p\wi\Psi_{\wi h}-p_0\eta+p_0\max_{\wi X}\wi\varphi_\eq.\]
Thus 
\[\frac{1}{2p}\,\log\wi P_p\geq\wi\Psi_{\wi h}-\frac{1}{2p}\Big(\log C_1-2ap_0+
2p_0\max_{\wi X}\wi\varphi_\eq\Big)+\frac{p_0}{p}\,\eta\]
holds on $\wi X\setminus Z$ for all $p\geq p_0$. This yields \eqref{e:lest1} 
since $\eta\leq-1$.

\medskip

\textit{Step 3}. We now conclude the proof of the theorem. It follows from 
\eqref{e:Dreg} by using the {\L}ojasiewicz inequality that there exist constants 
$N,M>0$ such that 
\[\eta(x)\geq-N\big|\log\dist\big(x,Z\big)\big|-M\]
holds for all $x\in\wi X$. Hence by \eqref{e:lest1}
\begin{equation}\label{e:lest2}
\frac{1}{2p}\,\log\wi P_p\geq\wi\Psi_{\wi h}-
\frac{C_1}{p}\,\Big(\big|\log\dist\big(x,Z\big)\big|+1\Big)
\end{equation}
holds on $\wi X$ for all $p\geq p_0$ and some constant $C_1>0$.

If $\varphi$ is continuous we infer from \eqref{e:uest1} that given 
$\varepsilon>0$ there exists $p_\varepsilon\in\N$ such that 
\[\frac{1}{2p}\,\log\wi P_p\leq\wi\Psi_{\wi h}+\varepsilon\]
holds on $\wi X$ for $p\geq p_\varepsilon$. Assertion $(i)$ of 
Theorem \ref{T:wiFS} now follows from this and \eqref{e:lest2},
since the function $\log\dist\big(\cdot,Z\big)\in L^1(\wi X,\wi\omega^n)$ 
(see, e.g., \cite[Lemma 5.2]{CMN16}).

If $\varphi$ is H\"older continuous then so is $\wi\varphi$, 
hence $\Omega_{\wi\varphi}(\delta)\leq C_2\delta^\nu$ 
for some constant $C_2>0$, where $\nu$ is the H\"older exponent of 
$\wi\varphi$. Taking $\delta=p^{-1/\nu}$ in \eqref{e:uest1} we see that 
\[\frac{1}{2p}\,\log\wi P_p\leq\wi\Psi_{\wi h}+C\Big(p^{-1/\nu}+
p^{-1}+\frac{\log p}{\nu p}\Big)+2C_2\,\frac{C^\nu}{p}\leq
\wi\Psi_{\wi h}+C_3\,\frac{\log p}{p}\]
holds on $\wi X$ for $p\geq2$, where $C_3>0$ is a constant. Assertion 
$(ii)$ follows immediately from this and \eqref{e:lest2}. This finishes the 
proof of Theorem \ref{T:wiFS}.
\end{proof}

\begin{proof}[Proof of Theorem \ref{T:FS}] 
Since $\pi:\wi X\setminus E\to X\setminus Y$ is a biholomorphism we infer 
using \eqref{e:wiFS}, \eqref{e:tpsih}, \eqref{e:wiom} that 
\begin{equation}\label{e:norm}
\int_X\Big|\frac{1}{2p}\,\log P_p-\Psi_h\Big|\,\omega^n=
\int_{\wi X}\Big|\frac{1}{2p}\,\log\wi P_p-\wi\Psi_{\wi h}\Big|\,\pi^\star\omega^n\leq
\int_{\wi X}\Big|\frac{1}{2p}\,\log\wi P_p-\wi\Psi_{\wi h}\Big|\,\wi\omega^n.
\end{equation}
Theorem \ref{T:wiFS} $(i)$ implies that $\frac{1}{2p}\,\log P_p\to\Psi_h$ in 
$L^1(X,\omega^n)$ as $p\to\infty$. Hence by \eqref{e:FSp1}, \eqref{e:psih}, 
\eqref{e:cu1}, \eqref{e:cu2} we get 
\[\frac{1}{p}\,\gamma_p=c_1(L,h)+\frac{1}{2p}\,dd^c\log P_p\to 
c_1(L,h)+dd^c\Psi_h=c_1(L,h_\eq),\]
weakly on $X$. Finally, assume that $\varphi$ is H\"older continuous. Then 
\eqref{e:sbk} follows from \eqref{e:norm} and Theorem \ref{T:wiFS} $(ii)$, 
since $\log\dist\big(\cdot,Z\big)\in L^1(\wi X,\wi\omega^n)$ 
\cite[Lemma 5.2]{CMN16}.
\end{proof}

\section{Zeros of random sequences of holomorphic sections}\label{S:zeros}

In this section we give the proof of Theorem \ref{T:zeros} and recall from 
\cite{BCM, BCHM} several important classes of probability measures 
which satisfy condition (B). 

\begin{proof}[Proof of Theorem \ref{T:zeros}] 

We use the main ideas from the proof of \cite[Theorem 1.1]{BCM} together with the 
convergence of the Bergman kernels and Fubini-Study currents established in 
Theorem \ref{T:FS}. 

\smallskip

$(i)$ There exists a constant $c>0$ such that, for every smooth real $(n-1,n-1)$ form 
$\phi$ on $X$, the total variation of $dd^c\phi$ satisfies 
$|dd^c\phi|\leq c\|\phi\|_{\cC^2}\,\omega^n$.
If $\phi$ is such a form, using \eqref{e:cbk}, 
we have show that 
\begin{equation}\label{e:expFS}
\frac{1}{p}\,\langle \E[s_p=0]-\gamma_p,\phi\rangle\to0\,,
\,\text{ as $p\to\infty$.}
\end{equation}
For $s_p\in H^0_{(2)}(X,L_p)$ we infer using \eqref{e:LP}
and \eqref{e:FSp1} that
\begin{equation}\label{e:sFS}
\big\langle [s_p=0],\phi\big\rangle=
\big\langle \gamma_p,\phi\big\rangle +
\int_X\log\frac{|s_p|_{h^p}}{\sqrt{P_p}}\,dd^c\phi.
\end{equation}
Let us write 
\begin{equation}\label{e:sp}
s_p=\sum_{j=1}^{d_p}a_jS_j^p.
\end{equation} 
Furthermore, for $x\in X$ we let $e$ 
be a holomorphic frame of $L$ on a neighborhood $U$ of $x$, so
$S_j^p=s_j^pe^{\otimes p}$ with $s_j^p\in\cO_X(U)$. 
Let 
\begin{align}
u^p(x)&=(u_1(x),\dots,u_{d_p}(x)),\; u_j(x)=
\frac{s_j^p(x)}{\sqrt{|s_1^p(x)|^2+\ldots+|s_{d_p}^p(x)|^2}}\,,\label{e:up}\\
\langle a, u^p\rangle&=a_1u_1+\ldots +a_{d_p}u_{d_p}.\label{e:ip}
\end{align}
By H\"older's inequality and assumption (B) we get 
\[\int_{H^0_{(2)}(X,L^p)}\Big|\log\frac{|s_p(x)|_{h^p}}{\sqrt{P_p(x)}}\Big|
\,d\sigma_p(s_p)=\int_{\C^{d_p}}\big|\log|\langle a ,u^p(x)\rangle|\big|
\,d\sigma_p(a)\leq C_p^{1/\nu}.\]
Hence by Tonelli's theorem, 
\[\int_{H^0_{(2)}(X,L^p)}\int_X\Big|\log\frac{|s_p|_{h^p}}{\sqrt{P_p}}\Big|
|dd^c\phi|\,d\sigma_p(s_p)
\leq C_p^{1/\nu}\int_X|dd^c\phi|\leq c\,C_p^{1/\nu}\|\phi\|_{\cC^2}\int_X\omega^n.\]
Using this and \eqref{e:sFS} we infer that $\E[s_p=0]$ is a well-defined positive closed 
current and 
$$ \big|\big\langle \E[s_p=0]-\gamma_p,\phi\big\rangle\big|
\leq c\,C_p^{1/\nu}\|\phi\|_{\cC^2}\int_X\omega^n.$$
Now \eqref{e:expFS} follows since $C_p^{1/\nu}/p\to0$.

\smallskip

$(iii)$ We define 
\[Y_p:\mathcal H\to[0,\infty)\,,\,\;
Y_p(s)=\frac{1}{p}\int_X\Big|\log\frac{|s_p|_{h^p}}{\sqrt{P_p}}\Big|\,\omega^n,\]
where $s=\{s_p\}$. By H\"older's inequality
\[0\leq Y_p(s)^\nu\leq\frac{1}{p^\nu}\left(\int_X\omega^n\right)^{\nu-1}
\int_X\Big|\log\frac{|s_p|_{h^p}}{\sqrt{P_p}}\Big|^\nu\omega^n.\]
Fix $x\in X$ and let $u^p(x)$ be defined in \eqref{e:up}. Using (B) and the 
notation from \eqref{e:sp}, \eqref{e:ip} we obtain
\[\int_{H^0_{(2)}(X,L^p)}\Big|\log\frac{|s_p(x)|_{h^p}}
{\sqrt{P_p(x)}}\Big|^\nu d\sigma_p(s_p)=\int_{\C^{d_p}}
\big|\log|\langle a ,u^p(x)\rangle|\big|^\nu d\sigma_p(a)\leq C_p.\]
Hence by Tonelli's theorem 
\[\int_{\mathcal H} Y_p(s)^\nu d\sigma(s)\leq\frac{1}{p^\nu}
\left(\int_X\omega^n\right)^{\nu-1}\int_X\int_{H^0_{(2)}(X,L^p)}
\Big|\log\frac{|s_p|_{h^p}}{\sqrt{P_p}}\Big|^\nu d\sigma_p(s_p)\;
\omega^n\leq\frac{C_p}{p^\nu}\left(\int_X\omega^n\right)^\nu.\]
It follows that
\[\sum_{p=1}^\infty\int_{\mathcal H} Y_p(s)^\nu d\sigma(s)<\infty,\]
hence 
\[Y_p(s)=\Big\|\frac{1}{p}\,\log|s_p|_{h^p}-\frac{1}{2p}\,\log P_p\Big\|_{L^1(X,\omega^n)}
\to0 \text{ as } p\to\infty,\]
for $\sigma$-a.e.\ $s\in\mathcal{H}$. By \eqref{e:cbk} we conclude that 
$\frac{1}{p}\log|s_p|_{h^p}\to\Psi_h$ in $L^1(X,\omega^n)$ for $\sigma$-a.e.\ 
$s\in\mathcal{H}$.
Using \eqref{e:LP}, this implies that 
\[\frac{1}{p}\,[s_p=0]\to c_1(L,h)+dd^c\Psi_h=c_1(L,h_\eq)\]
weakly on $X$, for $\sigma$-a.e.\ $s\in\mathcal{H}$. 

\smallskip

$(ii)$ There exists an increasing sequence $\{p_j\}$ such that 
$\sum_{j=1}^\infty C_{p_j}p_j^{-\nu}<\infty$. So we can argue as in the 
proof of $(iii)$, working with the sequence $\{p_j\}$.
\end{proof}

\medskip

We review next several classes of probability measures from \cite{BCM,BCHM} 
that verify assumption (B), with estimates for the constants $C_p$. Note that if 
the measures $\sigma_p$ satisfy (B) for some $\nu\geq1$ with 
$C_p=\Gamma_\nu$ independent of $p$, then the hypothesis 
of Theorem \ref{T:zeros} $(i)$ is verified. Moreover, the hypothesis 
of Theorem \ref{T:zeros} $(iii)$ is also verified in this case if $\nu>1$, so 
Theorem \ref{T:zeros} applies to such measures.

As in \cite{BCM,BCHM} we consider measures on $\C^k$, for any  
$k\geq1$. Some of these will be singular measures supported on $\R^k\subset\C^k$ 
or on the unit spheres ${\mathbf S}^{2k-1}\subset\C^k$ and 
${\mathbf S}^{k-1}\subset\R^k\subset{\mathbb C}^k$. We denote in the sequel 
by $\lambda_m$ the Lebesgue measure on $\R^m$. Moreover, we let 
\[\|a\|^2=\sum_{j=1}^m|a_j|^2, \text{ where } a=(a_1,\ldots,a_m)\in\C^m.\]

\medskip

\noindent
\textit{Gaussians}. These are the measures $\sigma_k$ on $\C^k$ and 
$\sigma'_k$ on $\R^k\subset\C^k$ given by the densities 
\begin{equation}\label{e:Gauss}
d\sigma_k(a)=\frac{1}{\pi^k}\,e^{-\|a\|^2}\,d\lambda_{2k}(a),\;a\in\C^k,\;\;
d\sigma'_k(a')=\frac{1}{\pi^{k/2}}\,e^{-\|a'\|^2}\,d\lambda_k(a'),\;a'\in\R^k\subset\C^k.
\end{equation}
Both these measures verify condition (B) for every $\nu\geq1$, with constants 
$C_k=\Gamma_\nu$ independent of $k$ 
(\cite[Lemma 4.8]{BCM}, \cite[Proposition 4.2]{BCHM}).

\medskip

\noindent
\textit{Fubini-Study volumes}. Let $\alpha>0$ and 
$\sigma_{k,\alpha}\,,\sigma'_{k,\alpha}$ be the measures 
given by the radial densities
\begin{equation}\label{e:rad}
\begin{split}
d\sigma_{k,\alpha}(a)&=\frac{\Gamma(k+\alpha)}{\Gamma(\alpha)\pi^k}\,
(1+\|a\|^2)^{-k-\alpha}\,d\lambda_{2k}(a),\;a\in\C^k, \\
d\sigma'_{k,\alpha}(a')&=\frac{\Gamma(\frac{k}{2}+\alpha)}
{\Gamma(\alpha)\pi^{\frac{k}{2}}}\,(1+\|a'\|^2)^{-\frac{k}{2}-\alpha}\,
d\lambda_k(a'),\;a'\in\R^k\subset\C^k,
\end{split}
\end{equation}
where $\Gamma$ is the Gamma function. In particular, the Fubini-Study volume 
on the projective space ${\mathbb P}^k\supset\C^k$ 
is given by the measure $\sigma_{k,1}$ on $\C^k$, see \eqref{e:FSvol}.
Both measures from \eqref{e:rad} verify condition (B) for every $\nu\geq1$, with 
constants $C_k=\Gamma_{\alpha,\nu}$ independent of $k$ 
(\cite[Section 4.2.2]{BCM}, \cite[Proposition 4.3]{BCHM}).

\medskip

\noindent
\textit{Area measure of spheres}. Let ${\mathcal A}_m$ be the surface measure 
on the unit sphere ${\mathbf S}^{m-1}\subset\R^m$, and let
\begin{equation}\label{e:sphere}
\sigma_m=\frac{1}{s_m}\,{\mathcal A}_m\,, \text{ where } 
s_m={\mathcal A}_m({\mathbf S}^{m-1})=
\frac{2\pi^{\frac{m}{2}}}{\Gamma(\frac{m}{2})}\,.
\end{equation} 
The measures $\sigma_{2k}$ on ${\mathbf S}^{2k-1}\subset\C^k$ and 
$\sigma_k$ on ${\mathbf S}^{k-1}\subset\R^k\subset\C^k$ verify condition (B) 
for every $\nu\geq1$, with constants $C_k=M_\nu(\log k)^\nu$ 
(\cite[Lemma 4.11]{BCM}, \cite[Proposition 4.4]{BCHM}).
Recall that by Siegel's lemma we have 
\begin{equation}\label{e:Siegel}
d_p=\dim H^0(X,L^p)\lesssim p^n,
\end{equation}
for any holomorphic line bundle $L$ over a compact, normal, irreducible complex 
space $X$ of dimension $n$. Taking $\nu=2$ it follows that the spherical measures 
$\sigma_{2d_p},\,\sigma_{d_p}$ on $H^0_{(2)}(X,L^p)\cong\C^{d_p}$ satisfy 
assumption (B) with constants $C_p\sim(\log p)^2$.
So the hypothesis 
$\sum_{p=1}^\infty C_pp^{-2}<\infty$ in 
Theorem \ref{T:zeros} $(iii)$ is verified, and 
Theorem \ref{T:zeros} holds for the spherical measures
$\sigma_{2d_p},\,\sigma_{d_p}$.

\medskip

\noindent
\textit{Random holomorphic sections with i.i.d.\ coefficients}.
Let $\phi_1:\C\to[0,M]$, $\phi_2:\R\to[0,M]$, where $M>0$, be measurable 
functions such that $\int_\C\phi_1\,d\lambda_2=1$, 
$\int_\R\phi_2\,d\lambda_1=1$. Moreover, assume that there exist 
constants $c>0$, $\rho>1$ such that for all $R>0$, 
\begin{equation}\label{e:tail} 
\int_{\big\{\zeta\in\C:\,|\zeta|>e^R\big\}}\phi_1(\zeta)\,d\lambda_2(\zeta)\leq cR^{-\rho}
\,,\,\;\int_{\big\{x\in\R:\,|x|>e^R\big\}}\phi_2(x)\,d\lambda_1(x)\leq cR^{-\rho}.
\end{equation}
Let $\sigma_k\,,\sigma'_k$ be the measures with densities
\begin{equation}\label{e:iid}
\begin{split}
d\sigma_k(a)&=\phi_1(a_1)\ldots\phi_1(a_k)\,d\lambda_{2k}(a),\;a=(a_1,\ldots,a_k)\in\C^k,\\
d\sigma_k(a')&=\phi_2(a'_1)\ldots\phi_2(a'_k)\,d\lambda_k(a'),\;a'=(a'_1,\ldots,a'_k)\in\R^k\subset\C^k.
\end{split}
\end{equation}
These measures verify condition (B) for every $1\leq\nu<\rho$ with constants 
$C_k=\Gamma k^{\nu/\rho}$, where $\Gamma=\Gamma(M,c,\rho,\nu)>0$ 
(\cite[Lemma 4.15]{BCM}, \cite[Proposition 4.6]{BCHM}).

Endowing the Bergman spaces $H^0_{(2)}(X,L^p)$ with the measures in \eqref{e:iid} 
means the following: if $\{S_j^p\}_{j=1}^{d_p}$ is a fixed orthonormal basis of 
$H^0_{(2)}(X,L^p)$, then random holomorphic sections are of the form 
\[s_p=\sum_{j=1}^{d_p}a_j^pS_j^p\,,\]
where the coefficients $\{a_j^p\}_{j=1}^{d_p}$ are i.i.d.\ complex, resp.\ real, 
random variables with density function $\phi_1$, resp.\ $\phi_2$.
Using \eqref{e:Siegel}, we infer that the measures $\sigma_{d_p},\,\sigma'_{d_p}$, 
defined by \eqref{e:iid} on $H^0_{(2)}(X,L^p)\cong\C^{d_p}$, satisfy 
assumption (B) for each $1\leq\nu<\rho$, with constants $C_p\sim p^{n\nu/\rho}$. 
Assume that $\rho>n+1$ and fix $\nu$ such that 
\[\frac{\rho}{\rho-n}<\nu<\rho,\; \text{ so } C_pp^{-\nu}\sim p^{-(\rho-n)\nu/\rho}.\]
It follows that the hypothesis 
$\sum_{p=1}^\infty C_pp^{-\nu}<\infty$ in Theorem \ref{T:zeros} $(iii)$ is verified. 
Hence Theorem \ref{T:zeros} applies to the measures $\sigma_{d_p},\,\sigma'_{d_p}$ 
from \eqref{e:iid}, provided that the tail decay condition \eqref{e:tail} holds with 
exponent $\rho>n+1$.

\medskip

We conclude the paper by specializing our results to the case of polynomials 
on $\C^n$.

\begin{Example}\label{E:poly} 
Let $X={\mathbb P}^n$, $\omega_{\FS}$ be the Fubini-Study 
form on $\mathbb{P}^n$, $L=\mO(1)$ be the hyperplane line bundle, and 
$h_{\FS}$ be the Fubini-Study metric on $\mO(1)$. So 
$c_1(\mO(1),h_{\FS})=\omega_{\FS}$. We denote by 
$[z_0:\ldots:z_n]$ the homogeneous coordinates on $\P^n$. 

The global holomorphic sections of $L^p=\mO(p)$ are given by homogeneous 
polynomials of degree $p$ in $\C[z_0,\ldots,z_n]$. Let 
$U_0=\{[1:\zeta]\in\mathbb{P}^n:\zeta\in\C^n\}\cong\C^n$. 
Using the holomorphic frame $e_0$ of $\mO(1)$ on $U_0$ that corresponds to $z_0$, 
we can identify $H^0(\mathbb{P}^n,\mO(p))$ to the space of polynomials on 
$\C^n$ of degree at most $p$,
\begin{equation}\label{e:hip}
H^0(\mathbb{P}^n,\mO(p))\to\C_p[\zeta]=
\left\{f\in\C[\zeta_1,\ldots,\zeta_n]:\deg f\leq p\right\}\,,\:\:s\mapsto s/z_0^p.
\end{equation}
Writing $\|\zeta\|^2=\sum_{j=1}^n|\zeta_j|^2$ for 
$\zeta=(\zeta_1,\ldots,\zeta_n)\in\C^n$, we have 
\begin{align}
|e_0|_{h_\FS}&=e^{-\rho}, \text{ where } \rho(\zeta)=
\frac{1}{2}\,\log\Big(1+\|\zeta\|^2\Big), \label{e:rho} \\
\omega_\FS^n|_{U_0}&=\frac{n!}{\pi^n}\,\frac{1}{(1+\|\zeta\|^2)^{n+1}}\,
d\lambda_{2n}(\zeta).\label{e:FSvol}
\end{align}

It is well known that the class $\PSH(\P^n,\omega_\FS)$ is in one-to-one 
correspondence to the Lelong class $\mathcal{L}({\mathbb C}^n)$ of 
entire psh functions with logarithmic growth (see, e.g., 
\cite[Section 2]{GZ05}, \cite[Example 4.4]{BCM}), 
\[\mathcal{L}({\C}^n)=\left\{v\in\PSH(\C^n):\,\exists\, C_v\in\R \text{ such that
$v(z)\leq \log^+\|z\|+C_v$ on $\C^n$}\right\}.\]

Let $h=h_\FS e^{-2\varphi}$ be a continuous Hermitian metric on $\mO(1)$, 
where $\varphi$ is a continuous function on $\P^n$.
Let $h_\eq=h_\FS e^{-2\varphi_\eq}$ be the equilibrium metric
associated to $h$, as defined in \eqref{e:eqm}, \eqref{e:eqw}.
Then the local weights $\psi$, resp.\ $\psi_\eq$, of $h$ and
$h_\eq$ on $U_0\cong\C^n$ with respect to the frame $e_0$
are given by 
\[\psi=\rho+\varphi,\;
\psi_\eq=\rho+\varphi_\eq=\sup\{v\in\mathcal{L}({\C}^n):
\,v\leq\psi \text{ on } \C^n\}.\]
Thus $\psi_\eq$ is the largest psh function with logarithmic growth
dominated by $\psi$ on $\C^n$. 
If $s_p\in H^0(\mathbb{P}^n,\mO(p))$ corresponds to
$f_p\in\C_p[\zeta]$ then $|s_p|_{h^p}=|f_p|e^{-p\psi}$ on $U_0$.
Moreover, $\Psi_h=\psi_\eq-\psi$ on $U_0$,
where $\Psi_h$ is defined in \eqref{e:psih}.

The map \eqref{e:hip} induces an isometry
between $H^0_{(2)}(\mathbb{P}^n,\mO(p))$ and the space 
\[\C_{p,(2)}[\zeta]=\left\{f\in\C_p[\zeta]:\,
\int_{\C^n}|f|^2e^{-2p\psi}\,\frac{\omega^n_{\FS}}{n!}<\infty\right\}.\]
Let $\sigma_p$ be probability measures on $\C_{p,(2)}[\zeta]$ and set 
$(\mathcal{H},\sigma)=
\left(\prod_{p=1}^\infty \C_{p,(2)}[\zeta],\prod_{p=1}^\infty\sigma_p\right)$.
Theorem \ref{T:zeros} has the following immediate consequence.

\begin{Corollary}\label{C:poly}
Let $\psi:\C^n\to\R$ be a continuous function such that $\psi-\rho$ extends 
continuously to $\P^n\supset\C^n$, where $\rho$ is as in \eqref{e:rho}, and set 
\[\psi_\eq(\zeta)=\sup\{v(\zeta):\,v\in\mathcal{L}({\C}^n),\,
v\leq\psi \text{ on } \C^n\},\;\zeta\in\C^n.\]
Assume that $\sigma_p$ are probability measures on $\C_{p,(2)}[\zeta]$ that 
satisfy condition (B),  and that $\sum_{p=1}^{\infty}C_p\,p^{-\nu}<\infty$. Then 
for $\sigma$-a.e.\ sequence $\{f_p\}\in\mathcal{H}$ we have as $p\to\infty$,
\[\frac{1}{p}\,\log|f_p|\to\psi_\eq\text{ in } L^1(\C^n,\omega_{\FS}^n),\;
\frac{1}{p}\,[f_p=0]\to dd^c\psi_\eq\, \text{ weakly on } \C^n.\]
\end{Corollary}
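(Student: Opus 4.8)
The plan is to deduce the corollary as an immediate application of Theorem \ref{T:zeros}(iii) in the concrete setting $X=\P^n$, $L=\cO(1)$, $h_0=h_\FS$, after translating the intrinsic statements on $\P^n$ into statements on the affine chart $U_0\cong\C^n$ via the trivialization $e_0$ and the isomorphism \eqref{e:hip}. First I would set $\varphi:=\psi-\rho$, which by hypothesis extends to a continuous function on $\P^n$, so that $h:=h_\FS e^{-2\varphi}$ is a continuous Hermitian metric on the big (indeed ample) line bundle $\cO(1)$; thus assumptions (A1)--(A2) hold for $(\P^n,\omega_\FS,\cO(1),h)$. I would then record that under \eqref{e:hip} a section $s_p\in H^0(\P^n,\cO(p))$ corresponds to $f_p\in\C_p[\zeta]$ with $|s_p|_{h^p}=|f_p|e^{-p\psi}$ on $U_0$, and that the Bergman space $H^0_{(2)}(\P^n,\cO(p))$ is carried isometrically onto $\C_{p,(2)}[\zeta]$, so that endowing the latter with $\sigma_p$ is the same as endowing the former with a probability measure; since $\sigma_p$ satisfies (B) and $\sum_p C_p p^{-\nu}<\infty$, Theorem \ref{T:zeros}(iii) applies verbatim.

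Next I would identify the equilibrium data. By the stated one-to-one correspondence between $\PSH(\P^n,\omega_\FS)$ and the Lelong class $\mathcal L(\C^n)$ (a function $u\in\PSH(\P^n,\omega_\FS)$ corresponds to $u+\rho\in\mathcal L(\C^n)$ on $U_0$), the envelope $\varphi_\eq=V_\varphi$ from \eqref{e:eqw} corresponds on $U_0$ to $\rho+\varphi_\eq=\sup\{v\in\mathcal L(\C^n):v\le\rho+\varphi=\psi\}=\psi_\eq$, the function defined in the statement. Hence $\Psi_h=\varphi_\eq-\varphi=\psi_\eq-\psi$ on $U_0$, and $c_1(\cO(1),h_\eq)=\omega_\FS+dd^c\varphi_\eq$ restricts on $U_0$ to $dd^c(\rho+\varphi_\eq)=dd^c\psi_\eq$, using $c_1(\cO(1),h_\FS)|_{U_0}=dd^c\rho$.

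Now I would transfer the two conclusions of Theorem \ref{T:zeros}(iii). For $\sigma$-a.e.\ $\{f_p\}\in\mathcal H$, we have $\frac1p\log|s_p|_{h^p}\to\Psi_h$ in $L^1(\P^n,\omega_\FS^n)$; restricting to $U_0$ (whose complement is a hyperplane, hence $\omega_\FS^n$-null) and substituting $\frac1p\log|s_p|_{h^p}=\frac1p\log|f_p|-\psi$ and $\Psi_h=\psi_\eq-\psi$, the common term $\psi$ (continuous, hence $L^1$) cancels, giving $\frac1p\log|f_p|\to\psi_\eq$ in $L^1(\C^n,\omega_\FS^n)$. Similarly $\frac1p[s_p=0]\to c_1(\cO(1),h_\eq)$ weakly on $\P^n$; restricting this convergence of currents to the open set $U_0$ and using $[s_p=0]|_{U_0}=[f_p=0]$ together with the computation $c_1(\cO(1),h_\eq)|_{U_0}=dd^c\psi_\eq$ yields $\frac1p[f_p=0]\to dd^c\psi_\eq$ weakly on $\C^n$.

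The argument is essentially a dictionary translation, so there is no real obstacle; the only point requiring a little care is the identification of the envelope $V_\varphi$ with $\psi_\eq$ through the $\PSH(\P^n,\omega_\FS)\leftrightarrow\mathcal L(\C^n)$ correspondence — one must check that the correspondence is an order-preserving bijection compatible with the inequality constraints $u\le\varphi$ versus $v\le\psi$, which is immediate from $v=u+\rho$ and $\psi=\varphi+\rho$, and that $\PSH(\P^n,\omega_\FS,\varphi)\ne\emptyset$ so that Proposition \ref{P:env} guarantees $\varphi_\eq$ is itself $\omega_\FS$-psh (this holds since $0\in\PSH(\P^n,\omega_\FS)$ and $\varphi$ is bounded below).
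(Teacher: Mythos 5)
Your proposal is correct and follows essentially the same route as the paper, which sets up exactly this dictionary in Example \ref{E:poly} (the identification \eqref{e:hip}, the isometry onto $\C_{p,(2)}[\zeta]$, the relations $|s_p|_{h^p}=|f_p|e^{-p\psi}$, $\psi_\eq=\rho+\varphi_\eq$ and $\Psi_h=\psi_\eq-\psi$ on $U_0$) and then declares the corollary an immediate consequence of Theorem \ref{T:zeros}\,(iii). Your added care about the order-preserving correspondence $\PSH(\P^n,\omega_\FS)\leftrightarrow\mathcal L(\C^n)$ and the nonemptiness of $\PSH(\P^n,\omega_\FS,\varphi)$ matches the paper's framework.
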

\end{Example}

We refer to \cite{Bl05,BL15,Ba1} for related results when the inner products on $H^0(X,L^p)$ 
are defined using certain classes of Bernstein-Markov measures in place of the 
volume form $\omega^n$ (see also the survey \cite{BCHM}).

\end{document}